\providecommand{\U}[1]{\protect\rule{.1in}{.1in}}
\newcommand{\N}{\mathbb{N}}
\newcommand{\lo}{\longrightarrow}
\newtheorem{thm}{Theorem}[section]
\newtheorem{cor}[thm]{Corollary}
\newtheorem{lem}[thm]{Lemma}
\newtheorem{prop}[thm]{Proposition}
\newtheorem{defn}[thm]{Definition}
\newtheorem{exam}[thm]{Example}
\def\proof{{\parindent0pt {\bf Proof.\ }}}
\newcommand{\field}[1]{\mathbb{#1}}
\newcommand{\Z }{\field{Z}}
\theoremstyle{definition}
\theoremstyle{remark}
\theoremstyle{Definition and Notation}
\begin{document}

\title[When every finitely generated ideal is $S$-principal]{When every finitely generated ideal is $S$-principal}

\author[M. Chhiti]{Mohamed Chhiti}
\address{Mohamed Chhiti\\Laboratory of Modelling and Mathematical Structures, \\ Faculty of Economics and Social Sciences of Fez,
	University S.M. Ben Abdellah Fez, Morocco.
	$$E-mail\ address:\ chhiti.med@hotmail.com$$}

\author[S. E. Mahdou]{Salah Eddine Mahdou}
\address{Salah Eddine Mahdou \\Laboratory of Modelling and Mathematical Structures, \\ Faculty of Science and Technology of Fez, Box 2202, University S. M.
Ben Abdellah Fez, Morocco.
 $$E-mail\ address:\ salahmahdoulmtiri@gmail.com$$}

 \author[M. A. S. Moutui]{Moutu Abdou Salam Moutui}

 \address{Moutu Abdou Salam Moutui\\Division of Science, Technology, and Mathematics\\ American University of Afghanistan, Doha, Qatar.}
\email{mmoutui@auaf.edu.af}

\subjclass[2010]{13A15, 13B99, 13E15.}

\keywords{$S$-B\'ezout, localization, direct product, $S$-principal ideal, pullback, trivial ring extension, amalgamation.}

\begin{abstract}
In this paper, we introduce the concept of $S$-B\'ezout ring, as a generalization of B\'ezout ring. We investigate the relationships between $S$-B\'ezout and other related classes of rings. We establish some characterizations of $S$-B\'ezout rings. We study this property in various contexts of commutative rings including direct product, localization, trivial ring extensions and amalgamation rings. Our results allow us to construct new original classes of $S$-B\'ezout rings subject to various ring theoretical properties. Furthermore, we introduce the notion of nonnil $S$-B\'ezout ring and establish some characterizations.
\end{abstract}

\maketitle


\bigskip

\section{Introduction}
\bigskip
Throughout this paper, all rings are assumed to be commutative with nonzero identity and all modules are nonzero unital. In the last forty years, there have been many extensions of classical notions used in multiplicative ideal theory, with the need of generalizing many properties, including B\'ezout domains, to broader contexts. The notion of B\'ezout ring is central in commutative algebra and enlarging this class as well as producing new original examples can be of some interest. In \cite{AZf}, Anderson and Zafrullah extended the class of B\'ezout domains in the following way: they called a domain $A$ an almost B\'ezout domain if, given any two elements $a,b\in A$, there exists an integer $n\geq 1$ such that the ideal $(a^n,b^n)$ of $A$ is principal. Later, Mahdou, Mimouni and Moutui enlarged the notion almost B\'ezout domain to ring with zero divisors. In recent years, the concept of $S$-property has an important place in commutative algebra and they draw attention by several authors. In \cite{AD}, Anderson and Dumitrescu introduced the concept of $S$-finite modules, where $S$ is a multiplicatively subset as follows: an $R$-module $M$ is called an $S$-finite module if there exist a finitely generated $R$-submodule $N$ of $M$ and $s\in S$  such that $sM\subseteq N.$  Also, they introduced the concept of $S$-Noetherian rings as follows : a ring $R$ is called $S$-Noetherian if every ideal of $R$ is $S$-finite. Recently, in \cite{BE}, Bennis and El Hajoui investigated the  $S$-versions of finitely presented modules and coherent modules which are called, respectively, $S$-finitely presented modules and S-coherent modules. An $R$-module $M$ is called an $S$-finitely presented module for some multiplicative closed subset $S$ of $R$  if there exists  an exact sequence of $R$-modules $0 \to K \to F \to M \to 0$, where $F$ is a finitely generated free $R$-module and $K$ is an $S$-finite $R$-module. Moreover,  an $R$-module $M$ is said to be $S$-coherent, if it is finitely generated and every finitely generated submodule of $M$ is $S$-finitely presented. They showed that the $S$-coherent rings have a similar characterization to the classical one given by Chase for coherent rings (see \cite[Theorem 3.8]{BE}$)$. Any coherent ring is $S$-coherent and any $S$-Noetherian ring is $S$-coherent. For more detail about $S$-Noetherian and $S$-coherent rings, we refer the reader to \cite{AD, BE}. \\

Let $R$ be a ring and $M$ be an $R$-module. Then $R\propto M$, the trivial (ring) extension of $R$ by $M$, is
the ring whose additive structure is that of the external direct sum $R\oplus
M$ and whose multiplication is defined by $(r_{1},m_{1})(r_{2},m_{2}):=(r_{1}r_{2},r_{1}m_{2}+r_{2}m_{1})$ for all $r_{1},r_{2}\in R$ and all $m_{1},m_{2}\in M$. This construction is also known by other terminology,
such as the idealization. See for instance \cite{G,H}. Trivial ring extensions have
an important place in commutative ring theory, due to their effectiveness of producing new classes of examples and counter examples of rings subject to various
ring theoretic properties (see for example \cite{MMA, AW, G, H, KM}). \\

Let $(A,B)$ be a pair of rings, $J$ be an ideal of $B$ and $f: A\rightarrow B$ be a ring homomorphism. The amalgamation of $A$ with $B$ along $J$ with respect to $f$ is the following subring of $A\times B$:
 $$A\bowtie^{f}J = \{(a,f(a)+j) \diagup a \in A, j \in J\}.$$
  As a natural generalization of the
duplication construction in \cite{DF}, the amalgamation ring was initiated by D'Anna, Finocchiaro, and Fontana in
\cite{DFF}. This ring extension covers several ring theoretic constructions, including
the $A+XB[X]$, $A+XB[[X]]$, and the $D+M$ constructions as well as several classical constructions, such as trivial ring extensions and the CPI extensions. For more details regarding the amalgamation ring, we refer the reader to \cite{DFF, D, DF, EKM}.\\

It is worthwhile recalling that a prime ideal $P$ of $R$ is {\it divided} if $P \subseteq (x)$ for any $x \in R \setminus P$. A ring $R$ is said to be {\it divided} if every prime ideal of $R$ is divided. For more details on divided rings and related notions, we refer the reader to \cite{ayb1}. Badawi has extensively investigated in \cite{ayb4, ayb2, ayb3, ayb5, ayb6}, the following class of rings:
$\mathcal{H} = \{R \mid R$ is a commutative ring and $Nil(R)$ is a divided prime ideal of $R\}$.
Note that in the case $R \in \mathcal{H}$, $R$ is called a {\it $\phi$-ring}. Obviously, every integral domain is a $\phi$-ring. An ideal $I$ of $R$ is said to be a {\it nonnil ideal} if $I \nsubseteq$ $Nil(R)$. If $I$ is a nonnil ideal of a $\phi$-ring $R$, then $Nil(R)$ $\subseteq I$. Let $R$ be a ring with total quotient ring $T$ such that $Nil(R)$ is a divided prime ideal of $R$. As in \cite{ayb2}, we define $\phi : T(R) \longrightarrow K := R_{Nil(R)}$ such that $\phi(\frac{a}{b}) = \frac{a}{b}$ for every $a \in R$ and every $b \in R\setminus Z(R)$. Then $\phi$ is a ring homomorphism from $T(R)$ into $K$, and $\phi$ restricted to $R$ is also a ring homomorphism from $R$ into $K$ given by $\phi(x) = \frac{x}{1}$ for every $x \in R$. Observe that if $R \in \mathcal{H}$, then $\phi(R) \in \mathcal{H}$, $Ker(\phi) \subseteq Nil(R)$, $Nil(T(R)) = Nil(R)$, $Nil(R_{Nil(R)}) = \phi(Nil(R)) = Nil(\phi(R)) = Z(\phi(R))$, $T(\phi(R)) = R_{Nil(R)}$ is quasilocal with maximal ideal $Nil(\phi(R))$, and $R_{Nil(R)}/Nil(\phi(R)) = T(\phi(R))/Nil(\phi(R))$ is the quotient field of $\phi(R)/Nil(\phi(R))$.\\

In this paper, we introduce the concept of $S$-B\'ezout ring, as a generalization of B\'ezout ring. We investigate the relationships between $S$-B\'ezout and other related classes of rings. We establish some characterizations of $S$-B\'ezout rings. We study this property in various contexts of commutative rings including direct product, localization, trivial ring extensions and amalgamation rings. Our results allow us to construct new original classes of $S$-B\'ezout rings subject to various ring theoretical properties. Moreover, we introduce the notion of nonnil $S$-B\'ezout ring and establish some characterizations. For a ring $R$, we denote respectively by $Nil(R)$, $U(R)$, $Z(R)$ the ideal of all nilpotent elements of $R$, the multiplicative group of units of $R$ and the zero divisor elements of $R$. For an integral domain $R$, we denote by $qf(R)$, the quotient field of $R$.

\section{Main Results}
Let $R$ be a ring and let $S$ be a multiplicatively closed subset of $R$. Recall that a ring $R$ is B\'ezout if every finitely generated ideal of $R$ is principal. First, we introduce a weak version of B\'ezout ring called $S$-B\'ezout ring in the following way:
\begin{defn}
Let $R$ be ring and $S$ be a multiplicative set of $R$. An ideal $I$ of $R$ is said to be $S$-principal if there exists $s\in S$ such that $sI\subseteq rR\subseteq I$ for some $r\in R$. $R$ is said to be $S$-B\'ezout, if every finitely generated ideal of $R$ is $S$-principal.
\end{defn}

It is worthwhile noting that any B\'ezout ring is $S$-B\'ezout for every multiplicative set $S$ of $R$. However, the converse is not true in general as shown in the following examples which illustrate non-B\'ezout $S$-B\'ezout rings. \\
\begin{exam}\label{ex}
Let $R:=\mathbb{Z}\propto (\mathbb{Z}/{2\mathbb{Z}})^\infty$ be the trivial ring extension of $\mathbb{Z}$ by the $\mathbb{Z}$-module $(\mathbb{Z}/{2\mathbb{Z}})^\infty$ and $S :=\{2^n ,0) / n \in \mathbb{N}\}$ be a multiplicatively set of $R$. Then:
\begin{enumerate}
\item $R$ is an $S$-B\'ezout ring.
\item $R$ is not a B\'ezout ring.
\end{enumerate}
\end{exam}

\begin{proof}
\begin{enumerate}
\item Let $J$ be a finitely generated proper ideal of $R$ and set $I :=\{a \in \mathbb{Z} / (a,e) \in J$ for some $e \in (\mathbb{Z}/{2\mathbb{Z}})^\infty \}$. Two cases are then possible: \\
    Case 1. $I \neq 0$. \\
    In this case, say $I =n\mathbb{Z}$, for some $n \in \mathbb{Z}-\{0\}$. Let $e \in (\mathbb{Z}/{2\mathbb{Z}})^\infty$ such that $(n,e) \in J$. So, $J$ is $S$-principal since
    $$(2,0)J \subseteq R(n,e) \subseteq J $$
    since $(2,0)J =R(2n,0) =R(n,e)(2,0) \subseteq R(n,e)$, as desired. \\
    Case 2. $I = 0$. \\
    Then $J$ is also $S$-principal since
    $$(2,0)J \subseteq R(0,0) \subseteq J $$
    since $(2,0)J \subseteq (2,0)(0\propto (\mathbb{Z}/{2\mathbb{Z}})^\infty ) =R(0,0)$, as desired.
\item Let $(e,f)$ be linearly independent elements of the $\mathbb{Z}$-module $(\mathbb{Z}/{2\mathbb{Z}})^\infty $ and set
$J =R(0,e) + R(0,f)$. It is clear that $J$ is not a principal ideal of $R$, as desired.  \qed\\
\end{enumerate}
\end{proof}

\begin{exam}\label{thm3.10}
 Let $R_1$ be a B\'ezout ring, $R_2$ be a non-B\'ezout ring, $R :=R_1 \times R_2$ and set $S :=\{(1,1),(1,0)\}$ be a multiplicative set of $R$. For instance take $R_1:=K[x]$ the polynomial ring with coefficient in a field $K$, and $R_2:=F[X,Y]$, the polynomial ring in two variables $X,Y$ over a field $F$. It is well known that $R_2$ is a non-B\'ezout domain. Then:
 \begin{enumerate}
 \item $R$ is an $S$-B\'ezout ring.
 \item $R$ is a non-B\'ezout ring.
 \end{enumerate}
\end{exam}
\begin{proof}
\begin{enumerate}
 \item We claim that $R$ is an $S$-B\'ezout ring. Indeed, let $I :=I_1 \times I_2$ be a finitely generated ideal of $R :=R_1 \times R_2$. Hence,
 $I_1$ is a finitely generated ideal of a B\'ezout ring $R_1$ and so $I_1 =R_1 a_1$ for some $a_1 \in R_1$. Hence,
 $$(1,0)(I_1 \times I_2) \subseteq I_1 \times 0 (=R(a_1 ,0)) \subseteq I_1 \times I_2$$
 and so $I :=I_1 \times I_2$ is an $S$-principal ideal, as desired.
 \item $R :=R_1 \times R_2$ is a non-B\'ezout ring since $R_2$ is a non-B\'ezout ring.\qed\\

 \end{enumerate}
\end{proof}

Recall that a ring $R$ is called an $S$-PIR if every ideal is $S$-principal, that is for every ideal $I$ there exist a principal ideal $J$ and $s\in S$  such that $sI\subseteq J \subseteq I$. Clearly, any $S$-PIR is an $S$-B\'ezout ring for any multiplicatively closed subset $S$ of $R$.\\

\begin{exam}\label{0}
  Let $R$ be any non-B\'ezout domain  and set $S :=R - \{0\}$ be a multiplicatively set of $R$. Then:
   \begin{enumerate}
 \item $R$ is $S$-PIR. In particular, $R$ is $S$-B\'ezout ring.
 \item $R$ is not a B\'ezout ring.
  \end{enumerate}
\end{exam}

\begin{proof}
   \begin{enumerate}
 \item Let $I$ be a proper ideal of $R$ and let $s \in I - \{0\}$. Hence, $sI \subseteq Rs \subseteq I$ and so $I$ is $S$-principal since $Rs$ is a principal ideal of $R$, as desired. \\
\item By hypothesis. \qed\\
\end{enumerate}
\end{proof}

Recall that an ideal $I$ is said $n$-generated if it can be generated by $n$ elements. It is well known that a B\'ezout ring is a ring in which every $n$-generated (resp., $2$-generated ideal) is principal. Now, we generalize this result to $S$-B\'ezout ring. \\

\begin{prop}\label{prop3.4}
Let $R$ be ring and $S$ be a multiplicative set of $R$. Then $R$ is an $S$-B\'ezout ring if and only if every $2$-generated ideal is $S$-principal.
\end{prop}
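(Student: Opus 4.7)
The forward direction is essentially definitional: if every finitely generated ideal is $S$-principal, then in particular every $2$-generated one is. For the non-trivial direction, I would mimic the classical B\'ezout proof, inducting on the number $n$ of generators of an ideal $I=(a_1,\dots,a_n)$, with the additional bookkeeping needed to track the elements of $S$.

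The base cases $n=1$ (trivial, with $s=1$) and $n=2$ (the hypothesis) are immediate. For the inductive step, assume every $(n-1)$-generated ideal is $S$-principal and let $I=(a_1,\dots,a_n)$. Set $J=(a_1,\dots,a_{n-1})$. By the induction hypothesis, there exist $s_1\in S$ and $b\in R$ such that
\[
s_1 J\subseteq (b)\subseteq J.
\]
Apply the $2$-generated hypothesis to $K:=(b,a_n)$: there exist $s_2\in S$ and $c\in R$ with
\[
s_2 K\subseteq (c)\subseteq K.
\]
Now observe that $K\subseteq I$ (since $b\in J\subseteq I$ and $a_n\in I$), while
\[
s_1 I = s_1 J + s_1(a_n)\subseteq (b)+(a_n)=K.
\]
Chaining these inclusions and using that $S$ is multiplicatively closed, so $s_1s_2\in S$, I obtain
\[
s_1 s_2 I\subseteq s_2 K\subseteq (c)\subseteq K\subseteq I,
\]
which shows that $I$ is $S$-principal.

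There is no real obstacle here; the only point that requires a moment of care is verifying both containments $s_1 s_2 I\subseteq (c)$ and $(c)\subseteq I$ simultaneously, which is why the intermediate ideals $J$ and $K$ must be chosen so that $J\subseteq I$ and $K\subseteq I$ from the start. The multiplicative closure of $S$ is what makes the induction go through: without it, the chained factor $s_1 s_2$ would not be guaranteed to lie in $S$, and the result would fail.
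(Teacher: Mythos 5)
Your argument is correct and is essentially the same as the paper's: both prove the nontrivial direction by induction on the number of generators, applying the inductive hypothesis to the subideal generated by all but the last generator, then applying the $2$-generated hypothesis to the ideal generated by the resulting "almost-generator" together with the last generator, and finally chaining the inclusions using the product $s_1s_2\in S$. Only the notation and the indexing of the induction differ.
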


\begin{proof}
If $R$ is an $S$-B\'ezout ring, then every $2$-generated ideal is $S$-principal. Conversely, assume that every $2$-generated ideal is $S$-principal. It suffices to show that if every $n$-generated ideal is $S$-principal, then every $(n+1)$-generated ideal is $S$-principal.\\
 Assume that every $n$-generated ideal is $S$-principal and let $I =\sum_{i=1}^{n+1}Ra_i$ be an $(n+1)$-generated ideal of $R$. Then there
 exist $s \in S$ and $a \in R$ such that
 $$s\sum_{i=1}^{n}Ra_i \subseteq Ra \subseteq \sum_{i=1}^{n}Ra_i$$
 by hypothesis since $\sum_{i=1}^{n}Ra_i$ is an $n$-generated ideal of $R$. Using the fact that $sRa_{n+1} \subseteq Ra_{n+1}$, we have
 $$s\sum_{i=1}^{n+1}Ra_i \subseteq s\sum_{i=1}^{n}Ra_i + Ra_{n+1} \subseteq Ra + Ra_{n+1} \subseteq \sum_{i=1}^{n+1}Ra_i.$$
On the other hand, $Ra + Ra_{n+1}$ is $2$-generated, then by hypothesis, there exist $s' \in S$ and $b \in R$ such that
 $$s'(Ra + Ra_{n+1}) \subseteq Rb \subseteq Ra + Ra_{n+1} (\subseteq \sum_{i=1}^{n+1}Ra_i).$$
 But, we have $ss'\sum_{i=1}^{n+1}Ra_i \subseteq s'(Ra + Ra_{n+1})$. Therefore,
 $$ss'\sum_{i=1}^{n+1}Ra_i \subseteq Rb \subseteq \sum_{i=1}^{n+1}Ra_i$$
 as $ss' \in S$, which completes the proof.
   \qed\\
\end{proof}

\bigskip
The next theorem establishes another characterization of $S$-B\'ezout ring.
\begin{thm}\label{thm30}
 Let $R$ be a ring and $S$ be a multiplicative set of $R$. Then $R$ is an $S$-B\'ezout ring if and only if every $S$-finite ideal of $R$ is $S$-principal.
\end{thm}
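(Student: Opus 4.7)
The plan is to prove both implications directly from the definitions, with the forward direction being the only one requiring any work.

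For the easy direction ($\Leftarrow$), I would observe that every finitely generated ideal $I$ is automatically $S$-finite: take $N=I$ itself as the finitely generated submodule and any $s\in S$, so that $sI\subseteq N\subseteq I$. Hence by hypothesis $I$ is $S$-principal, which means $R$ is $S$-B\'ezout.

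For the forward direction ($\Rightarrow$), suppose $R$ is $S$-B\'ezout and let $I$ be an $S$-finite ideal of $R$. By definition of $S$-finite there exist $s_1\in S$ and a finitely generated ideal $N\subseteq I$ with $s_1 I\subseteq N$. The key observation is that $N$, being finitely generated, is $S$-principal by the $S$-B\'ezout hypothesis: there exist $s_2\in S$ and $a\in R$ such that
$$s_2 N \subseteq Ra \subseteq N.$$
Multiplying $s_1 I\subseteq N$ by $s_2$ and chaining the inclusions gives
$$s_1 s_2\, I \;\subseteq\; s_2 N \;\subseteq\; Ra \;\subseteq\; N \;\subseteq\; I,$$
and since $s_1 s_2\in S$, this exhibits $I$ as $S$-principal.

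I do not anticipate any real obstacle here; the proof is essentially a ``two-step sandwich'' argument where one first replaces the $S$-finite ideal $I$ by a finitely generated ideal $N$ trapped between $s_1 I$ and $I$, and then replaces $N$ by a principal ideal trapped between $s_2 N$ and $N$, with the product $s_1 s_2\in S$ playing the role of the single element witnessing $S$-principality of $I$. The only subtle point worth stating clearly is that a finitely generated ideal is trivially $S$-finite, which is what makes the converse immediate.
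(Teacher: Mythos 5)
Your proof is correct and follows essentially the same route as the paper: the converse is immediate since finitely generated ideals are $S$-finite, and the forward direction is the same two-step sandwich $ss'I\subseteq s'J\subseteq Ra\subseteq J\subseteq I$ with $ss'\in S$. No issues.
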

\begin{proof}
If every $S$-finite ideal of $R$ is $S$-principal, then it is clear that $R$ is $S$-B\'ezout. Conversely, assume that $R$ is $S$-B\'ezout
and let $I$ be an $S$-finite ideal of $R$. Then there exist a finitely generated ideal $J$ of $R$ and $s \in S$ such that:
$$sI \subseteq J \subseteq I.$$
Hence, $J$ is $S$-principal and so there exist $s' \in S$ and $a \in R$ such that
$$s'J \subseteq Ra \subseteq J.$$
Therefore,
$$ss'I  \subseteq s'J \subseteq Ra \subseteq J \subseteq I$$
and so $I$ is $S$-principal, as desired.  \qed\\
\end{proof}

Next, we investigate the stability of $S$-B\'ezout ring property under localization.

\begin{prop}\label{prop3.4}
Let $R$ be a ring and $S\subseteq R$ be a multiplicative set of $R$. Then:
 \begin{enumerate}
 \item If $R$ is an $S$-B\'ezout ring, then $S^{-1}R$ is a B\'ezout ring.
 \item Assume that $R$ is $S$-Noetherian (not necessary Noetherian). Then $R$ is an $S$-B\'ezout ring if and only if $S^{-1}R$ is a B\'ezout ring.
 \end{enumerate}
\end{prop}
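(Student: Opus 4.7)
The plan is to move back and forth between ideals of $R$ and ideals of $S^{-1}R$, using the standard fact that every finitely generated ideal of $S^{-1}R$ has the form $S^{-1}I$ with $I\subseteq R$ finitely generated (any denominator appearing in a generator is a unit of $S^{-1}R$, hence can be cleared). For part (1), I would start with a finitely generated $\mathcal{I}\subseteq S^{-1}R$, write $\mathcal{I}=S^{-1}I$, apply the $S$-B\'ezout hypothesis to $I$ to obtain $s\in S$ and $a\in R$ with $sI\subseteq Ra\subseteq I$, and then localize. Since $s/1$ is a unit in $S^{-1}R$, the outer terms of the localized chain collapse to $S^{-1}I$, forcing $S^{-1}I=(S^{-1}R)(a/1)$ to be principal.

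For part (2), the forward direction is immediate from (1). For the converse, the plan is to reverse the localization step. Take a finitely generated ideal $I=(x_1,\dots,x_n)$ of $R$; the extension $S^{-1}I$ is finitely generated and hence principal in the B\'ezout ring $S^{-1}R$, and after absorbing the denominator of its generator I may assume $S^{-1}I=(S^{-1}R)(a/1)$ for some $a\in R$. Now the two inclusions must be pulled back to $R$. Because each $x_i/1\in S^{-1}(Ra)$, there exist $s_i\in S$ with $s_ix_i\in Ra$, so the product $s:=\prod_i s_i$ satisfies $sI\subseteq Ra$. Because $a/1\in S^{-1}I$, there is also $u\in S$ with $ua\in I$. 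Setting $b:=ua$ gives $Rb\subseteq I$, while $(su)I\subseteq u\cdot Ra=R(ua)=Rb$, so $I$ is $S$-principal with witness $su\in S$.

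The main technical point is the inclusion $Rb\subseteq I$ required by the definition of $S$-principality: the element $a$ produced by the B\'ezout property of $S^{-1}R$ lives only in $S^{-1}I$, not necessarily in $I$, so it must be replaced by a scalar multiple $ua$ that does land in $I$, without spoiling the first inclusion $sI\subseteq Ra$. This is precisely where multiplicative closure of $S$ is used: the various $s_i$ from the first step and the $u$ from the second must be amalgamated into a single element $su\in S$ that witnesses the $S$-principality of $I$.
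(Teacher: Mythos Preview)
Your argument for part (1) is essentially identical to the paper's: write a finitely generated ideal of $S^{-1}R$ as $S^{-1}I$ with $I$ finitely generated, apply the $S$-B\'ezout hypothesis to $I$, and localize.

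For part (2), however, your route diverges from the paper's and is in fact more elementary. The paper does not pull a principal generator of $S^{-1}I$ back to $R$ directly. Instead it uses the $S$-Noetherian hypothesis in an essential way: since $R$ is $S$-Noetherian, $S^{-1}R$ is Noetherian, so a B\'ezout $S^{-1}R$ is actually a PIR; then \cite[Proposition 2(g)]{AD} gives that $R$ is an $S$-PIR, hence $S$-B\'ezout. Your argument, by contrast, never invokes the $S$-Noetherian assumption at all: you only ever work with a single finitely generated ideal $I$, localize, clear denominators to produce $a\in R$ and then $b=ua\in I$, and assemble the witness $su$. This is correct as written, and it shows something strictly stronger than the proposition as stated---namely that the equivalence in (2) holds without any $S$-Noetherian hypothesis. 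What the paper's approach buys is a slightly stronger conclusion under the extra hypothesis (it yields $S$-PIR, not merely $S$-B\'ezout), at the cost of quoting an external result; what your approach buys is a self-contained proof that renders the hypothesis superfluous for the statement actually being proved.
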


\begin{proof}
$(1)$ Assume that $R$ is $S$-B\'ezout and let $J$ be a finitely generated ideal of $S^{-1}R$. There exists a finitely generated ideal $I$ of $R$ such that
$J =S^{-1}I$.  Then $I$ is $S$-principal and hence $J :=S^{-1}I$ is principal, as desired.\\
$(2)$ If $R$ is an $S$-B\'ezout ring, then by assertion $(1)$ above, $S^{-1}R$ is a B\'ezout ring. Conversely, assume that $S^{-1}R$ is a B\'ezout ring with the hypothesis that $R$ is $S$-Noetherian (not necessarily Noetherian). Then $S^{-1}R$ is a PIR. So, $R$ is an $S$-PIR by \cite[Proposition 2(g)]{AD}. Hence, $R$ is an $S$-B\'ezout ring, as desired. \qed
\end{proof}

\bigskip

Contrary to Proposition \ref{prop3.4}, the following proposition shows that we do not always need the assumption "$S$-Noetherian".

\begin{prop}\label{prop3.5}
Let $R$ be a ring and $S\subseteq U(R)$ be a multiplicative set of $R$. Then $R$ is an $S$-B\'ezout ring if and only if $S^{-1}R$ is a B\'ezout ring.
\end{prop}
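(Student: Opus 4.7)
The plan is to exploit the fact that when $S \subseteq U(R)$, every element of $S$ is already invertible in $R$, so the canonical localization map $R \to S^{-1}R$ is a ring isomorphism. Thus the condition ``$S^{-1}R$ is B\'ezout'' is just a restatement of ``$R$ is B\'ezout''. This reduces the whole proposition to proving: under the assumption $S \subseteq U(R)$, the ring $R$ is B\'ezout if and only if $R$ is $S$-B\'ezout. The forward implication (B\'ezout $\Rightarrow$ $S$-B\'ezout) is immediate since a principal ideal $Ra$ satisfies $1 \cdot Ra \subseteq Ra \subseteq Ra$ with $1 \in S$, and this holds for any multiplicative set.

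For the converse direction, the crucial use of $S \subseteq U(R)$ is as follows. Suppose $R$ is $S$-B\'ezout and let $I$ be a finitely generated ideal of $R$. By the $S$-B\'ezout hypothesis, there exist $s \in S$ and $a \in R$ such that
$$sI \subseteq Ra \subseteq I.$$
Because $s \in U(R)$, multiplication by $s$ is a bijection on $R$, and in particular $sI = I$. Hence $I \subseteq Ra \subseteq I$, which forces $I = Ra$ to be principal. So $R$ is B\'ezout.

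Alternatively, one direction can be done by invoking the earlier proposition rather than the isomorphism: by Proposition~\ref{prop3.4}(1), $R$ being $S$-B\'ezout already gives that $S^{-1}R$ is B\'ezout without any assumption on $S$. For the reverse, the $S \subseteq U(R)$ hypothesis is used to transfer a B\'ezout presentation of a finitely generated ideal from $S^{-1}R$ back to $R$: given a finitely generated $I \subseteq R$ with $S^{-1}I = (a/s)S^{-1}R$, clearing denominators by the unit $s$ produces a generator in $R$ itself.

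There is essentially no obstacle here beyond recording the observation that $S \subseteq U(R)$ trivializes the localization; the work lies entirely in making this reduction explicit so that Proposition~\ref{prop3.4}(1) can be applied without needing the $S$-Noetherian hypothesis.
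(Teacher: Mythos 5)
Your argument is correct: since $S\subseteq U(R)$ the localization map $R\to S^{-1}R$ is an isomorphism, and the identity $sI=I$ for a unit $s$ collapses ``$S$-principal'' to ``principal,'' which together give the equivalence. The paper in fact states Proposition~\ref{prop3.5} without any proof, and your reasoning is precisely the intended one (it also explains the remark that the $S$-Noetherian hypothesis of the preceding proposition is not needed here).
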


\bigskip

If we choose $S \subseteq U(R)$, then we obtain the following known corollary.

\begin{cor}\label{cor3.7}
Every Noetherian B\'ezout ring is a PIR.
\end{cor}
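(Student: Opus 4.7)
The plan is to reduce the corollary to Theorem~\ref{thm30} via the observation that, when $S \subseteq U(R)$, the two $S$-qualifiers appearing there both collapse to their classical counterparts. Indeed, if $s \in U(R)$, then $sI = I$ for every ideal $I$ of $R$, so the inclusion $sI \subseteq J \subseteq I$ in the definition of an $S$-principal ideal forces $I = J$, making $I$ itself principal; similarly, the inclusion $sI \subseteq N \subseteq I$ with $N$ finitely generated forces $I = N$, so every $S$-finite ideal is automatically finitely generated.

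With this collapse in hand, given $R$ Noetherian and B\'ezout, I would pick $S := \{1\} \subseteq U(R)$. Because $R$ is B\'ezout it is trivially $S$-B\'ezout, so Theorem~\ref{thm30} applies and every $S$-finite ideal of $R$ is $S$-principal. Noetherianity of $R$ makes every ideal finitely generated, hence $S$-finite, so every ideal of $R$ is $S$-principal. Applying the collapse observation in the reverse direction, every ideal of $R$ is in fact principal, which is exactly the statement that $R$ is a PIR.

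There is no real obstacle; the result is classical, and the argument above is essentially a repackaging through the $S$-machinery of the direct one-line proof, namely that every ideal is finitely generated by Noetherianity and every finitely generated ideal is principal by B\'ezoutness, so every ideal is principal. The only thing worth being careful about is the verification of the collapse $sI = I$ for $s \in U(R)$, which is a two-line check but is what gives the parenthetical hypothesis $S \subseteq U(R)$ its force in this context.
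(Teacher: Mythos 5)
Your proof is correct, but it does not follow the paper's (implicit) route. The paper offers no written proof of this corollary: it presents it as the specialization $S \subseteq U(R)$ of the preceding localization results, where the chain is $R$ Noetherian B\'ezout $\Rightarrow$ $R$ is $S$-Noetherian and $S^{-1}R \cong R$ is B\'ezout $\Rightarrow$ $S^{-1}R$ is a PIR $\Rightarrow$ $R$ is an $S$-PIR by Anderson--Dumitrescu $\Rightarrow$ $R$ is a PIR by the collapse of $S$-notions over units. Read as a proof, that derivation is circular, since the middle step ``$S^{-1}R$ is a Noetherian B\'ezout ring, hence a PIR'' is precisely the classical fact being recovered. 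Your route through Theorem~\ref{thm30} avoids this: B\'ezoutness gives $S$-B\'ezout for $S=\{1\}$, Noetherianity makes every ideal $S$-finite, Theorem~\ref{thm30} makes every ideal $S$-principal, and your (correctly verified) collapse $sI=I$ for $s\in U(R)$ turns $S$-principal into principal. Both arguments rest on the same collapse observation, and both are, as you note, repackagings of the one-line direct proof; yours has the merit of being non-circular and self-contained within the paper's own results.
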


\bigskip

For a prime ideal $P$ of $R$, we define $R$ to $P$-B\'ezout if $R$ is $(R-P)$-B\'ezout. We establish the following characterization for a semilocal ring.

\begin{thm}\label{thm3.10}
For a semilocal ring $R$, the following statements are equivalent:
\begin{enumerate}
\item $R$ is a B\'ezout ring.
\item $R$ is a $P$-B\'ezout ring for all prime ideal $P$ of $R$.
\item $R$ is an $M$-B\'ezout ring for all maximal ideal $M$ of $R$.
\end{enumerate}
\end{thm}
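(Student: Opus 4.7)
The plan is to dispense with the two easy implications and then concentrate on the substantive direction. For $(1)\Rightarrow(2)$, any principal ideal is trivially $S$-principal for every multiplicative set $S$, so a B\'ezout ring is automatically $(R-P)$-B\'ezout for every prime ideal $P$. The implication $(2)\Rightarrow(3)$ is immediate since every maximal ideal is prime.

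The substantive work is $(3)\Rightarrow(1)$. Let $M_1,\dots,M_n$ be the (finitely many) maximal ideals of $R$, and let $I$ be a finitely generated ideal. For each index $i$, the $M_i$-B\'ezout hypothesis yields $s_i\in R\setminus M_i$ and $b_i\in I$ with $s_i I\subseteq Rb_i\subseteq I$; in particular, passing to $R_{M_i}$ (where $s_i$ becomes a unit) shows that $b_i$ generates $IR_{M_i}$. The idea is to glue these local generators into a single global generator $a\in I$.

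The gluing uses the Chinese Remainder Theorem: since $M_1,\dots,M_n$ are pairwise comaximal, one can choose $e_1,\dots,e_n\in R$ with $e_i\equiv 1\pmod{M_i}$ and $e_i\in M_j$ for every $j\neq i$. Set $a:=\sum_{i=1}^{n} e_i b_i\in I$. For each $j$, writing each $b_i/1=b_j u_i$ in $R_{M_j}$ (possible because $b_i\in IR_{M_j}=b_j R_{M_j}$) and factoring gives $a/1=b_j\bigl(e_j+\sum_{i\neq j}e_i u_i\bigr)$, where the parenthesized factor is a unit in $R_{M_j}$ because $e_j\notin M_j$ while each $e_i u_i$ with $i\neq j$ lies in $M_j R_{M_j}$. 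Hence $aR_{M_j}=b_j R_{M_j}=IR_{M_j}$ for every $j$.

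To conclude $I=Ra$, I would argue: given any $x\in I$, for each $j$ there is $t_j\in R\setminus M_j$ with $t_j x\in Ra$, so the colon ideal $(Ra:_R x)$ is not contained in $M_j$ for any $j$; since $R$ is semilocal this forces $(Ra:_R x)=R$, so $x\in Ra$. Thus $I\subseteq Ra$, and combined with $a\in I$ we obtain $I=Ra$, proving $R$ is B\'ezout. The main obstacle is precisely this gluing step, where the finiteness of the maximal spectrum (via CRT) and the reinterpretation of the $M_i$-B\'ezout condition as local principality at $M_i$ are both essential; without semilocality, there would be no way to patch an uncountable family of local data into a single generator.
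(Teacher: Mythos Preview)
Your argument is correct. The easy implications $(1)\Rightarrow(2)\Rightarrow(3)$ are handled exactly as in the paper, and for $(3)\Rightarrow(1)$ the paper simply writes ``We can use the same proof of \cite[Proposition~12]{AD}'' without further detail, whereas you supply a complete self-contained argument.

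A brief comparison is still worth making. The cited result of Anderson--Dumitrescu concerns the $S$-Noetherian property: if a semilocal ring $R$ is $M$-Noetherian for every maximal ideal $M$, one obtains $s_i\notin M_i$ and finitely generated $J_i\subseteq I$ with $s_iI\subseteq J_i$; since $(s_1,\dots,s_n)=R$, summing gives $I=\sum s_iI\subseteq\sum J_i\subseteq I$, so $I$ is finitely generated. Transporting that argument verbatim to the B\'ezout setting only yields $I=Rb_1+\cdots+Rb_n$, not that $I$ is principal, so an extra step is genuinely needed. Your CRT-based patching---forming $a=\sum e_ib_i$ and checking that $aR_{M_j}=b_jR_{M_j}=IR_{M_j}$ because $e_j+\sum_{i\neq j}e_iu_i$ is a unit in the local ring---is precisely that missing step, and the colon-ideal argument for $I\subseteq Ra$ is clean. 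In short, your proof is not merely a reproduction of the cited one but a proper adaptation that handles the passage from ``finitely generated'' to ``principal''; this is exactly what the paper is implicitly asking the reader to supply.
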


\begin{proof}
$(1)\Rightarrow(2)$ Straightforward.\\
$(2)\Rightarrow (3)$ Clear.\\
$(3)\Rightarrow (4)$ We can use the same proof as in \cite[Proposition 12]{AD}. \qed
\end{proof}

\bigskip
The next proposition examines the $S$-B\'ezout ring property under homomorphic image.

\begin{prop}\label{prop2.4}
Let $R$ be a ring, $I$ be a finitely generated ideal of $R$ and $S$ be a multiplicatively set of $R$. If $R$ is an $S$-B\'ezout ring, then $R/I$ is an $(S+I)$-B\'ezout ring. The converse is true if there is $s_0\in S$ such that $s_0I=0$.
\end{prop}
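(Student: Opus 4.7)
The plan is to use the canonical projection $\pi\colon R\to R/I$, for which $\pi(S+I)=\pi(S)$ is the multiplicative subset in $R/I$ relative to which the $(S+I)$-B\'ezout property is formulated. I will treat the two implications separately.

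For the forward direction, I would take a finitely generated ideal $\bar{J}$ of $R/I$ and lift it to a finitely generated ideal $J$ of $R$: since $I$ is finitely generated by hypothesis, appending generators of $I$ to any lift of generators of $\bar{J}$ yields a finitely generated ideal $J\subseteq R$ with $\pi(J)=\bar{J}$. By $S$-B\'ezout there exist $s\in S$ and $a\in R$ with $sJ\subseteq Ra\subseteq J$. Applying $\pi$ gives $\pi(s)\bar{J}\subseteq (R/I)\pi(a)\subseteq\bar{J}$, which is exactly the $(S+I)$-principal condition for $\bar{J}$.

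For the converse, assume $R/I$ is $(S+I)$-B\'ezout and fix $s_0\in S$ with $s_0I=0$. Given a finitely generated ideal $J=Ra_1+\cdots+Ra_n$ of $R$, the image $\pi(J)$ is finitely generated in $R/I$, so there exist $s\in S$ and $a\in R$ with
\[
\pi(s)\,\pi(J)\ \subseteq\ (R/I)\pi(a)\ \subseteq\ \pi(J).
\]
Unpacking these containments in $R$ gives
\[
sJ\ \subseteq\ Ra+I\qquad\text{and}\qquad Ra\ \subseteq\ J+I.
\]
Here is the main subtlety, and the step I expect to be the principal obstacle: these inclusions contain unwanted ``error terms'' in $I$, so one cannot directly conclude that $J$ is $S$-principal in $R$. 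The hypothesis $s_0I=0$ is precisely what kills these errors. Multiplying the first inclusion by $s_0$ yields $s_0sJ\subseteq s_0Ra+s_0I=R(s_0a)$, and multiplying the second by $s_0$ gives $s_0a\in s_0J+s_0I=s_0J\subseteq J$, hence $R(s_0a)\subseteq J$. Setting $s':=s_0s\in S$ and $b:=s_0a\in R$, we obtain
\[
s'J\ \subseteq\ Rb\ \subseteq\ J,
\]
which shows that $J$ is $S$-principal. By Proposition~\ref{prop3.4} (reduction to $2$-generated ideals) or directly from the definition, this suffices to conclude that $R$ is $S$-B\'ezout, completing the proof.
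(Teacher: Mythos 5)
Your proof is correct and follows essentially the same route as the paper: lift a finitely generated ideal of $R/I$ (using that $I$ is finitely generated) for the forward direction, and for the converse apply the $(S+I)$-principal condition to $\pi(J)=(J+I)/I$ and multiply through by $s_0$ to kill the error terms in $I$, obtaining $s_0sJ\subseteq R(s_0a)\subseteq J$. Your write-up is in fact slightly more careful than the paper's in making the unpacking of the containments explicit.
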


\begin{proof}
Assume that $R$ is $S$-B\'ezout. Let $J/I$ be a finitely generated ideal of $R/I$, where $J$ is a finitely generated ideal of $R$. Then there exist
$s \in S$ and $a \in R$ such that $$sJ \subseteq Ra \subseteq J$$ since $R$ is a B\'ezout ring. Hence, we have
$$(s+I)(J/I) \subseteq (R/I)(a+I) \subseteq J/I$$ and so $J/I$ is $(S+I)$-principal. Hence, $R/I$ is $(S+I)$-B\'ezout. Conversely, assume that $R/I$ is $(S+I)$-B\'ezout and there exists $s_0\in S$ such that $s_0 I=0$. Consider a finitely generated ideal $J$ of $R$. Then $((J+I)/I)$ is a finitely generated ideal of an $(S+I)$-B\'ezout ring $R/I$. Hence, there exist $s \in S$ and $a \in R$ such that:

$$(s+I)((J+I)/I)\subseteq (R/I)(a+I)\subseteq (J+I)/I.$$

By multiplying the above equation by $s_0$ and since $s_0 I =0$, we obtain :

$$ss_0 J\subseteq R(s_0a)\subseteq s_0J \subseteq J,$$
and so $J$ is $S$-principal. Hence, $R$ is an $S$-B\'ezout ring which completes the proof. \qed

\end{proof}

\bigskip
Now we study the stability of $S$-B\'ezout property under homomorphism. Let $A$ and $B$ be two rings, $f:A\rightarrow B$ be a ring
homomorphism and $I$ be an ideal of $A$. We denote by $I^{e}$, the
extension of $I$ is defined by the ideal of $B$ generated by $f(I)$;
and by $J^{c}$ the contraction of $J$ defined by the set of
antecedents of elements of $f(A)\cap J$, that is, $J^{c}=\{a\in
A|f(a)\in J\}$.

\begin{prop}
Let $(A,B)$ be a pair of rings, $f:A\rightarrow B$ be a ring
homomorphism and $S$ be a multiplicative set of $A$ such that
$I^{ce}=I$ for each ideal $I$ of $B$ and $J^{c}$ is a finitely generated
ideal of $A$ for each finitely generated ideal $J$ of $B$. If $A$ is an
$S$-B\'ezout ring, then $B$ is an $f(S)$-B\'ezout ring.
\end{prop}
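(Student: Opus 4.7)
The plan is to reduce a given finitely generated ideal of $B$ to its contraction in $A$, apply the $S$-B\'ezout hypothesis there, and then push the resulting $S$-principal sandwich forward to $B$ using the assumption $I^{ce}=I$.

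More precisely, I would fix a finitely generated ideal $J$ of $B$. By hypothesis, $J^{c}$ is a finitely generated ideal of $A$, so since $A$ is $S$-B\'ezout there exist $s\in S$ and $a\in A$ with
$$sJ^{c}\subseteq Aa\subseteq J^{c}.$$
Taking extensions through $f$ (which preserves inclusions of ideals) gives
$$(sJ^{c})^{e}\subseteq (Aa)^{e}\subseteq (J^{c})^{e}=J,$$
the last equality being the standing hypothesis. The middle ideal is clearly $Bf(a)$, and the leftmost ideal equals $f(s)J$: each generator of $(sJ^{c})^{e}$ has the form $f(sy)=f(s)f(y)$ with $y\in J^{c}$, so the ideal it generates is $f(s)\cdot (J^{c})^{e}=f(s)J$. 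Putting this together yields $f(s)J\subseteq Bf(a)\subseteq J$ with $f(s)\in f(S)$, which is precisely the statement that $J$ is $f(S)$-principal in $B$.

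The argument is essentially routine; the only point requiring a brief check is the identification $(sJ^{c})^{e}=f(s)J$, which is a direct consequence of the definition of extension but is easy to stumble over if one conflates the product ideal $sJ^{c}$ inside $A$ with its $f$-image. No other step presents a genuine obstacle, since the two technical hypotheses on $(-)^{c}$ and $(-)^{ce}$ are tailored to make the transfer go through cleanly.
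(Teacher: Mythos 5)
Your proposal is correct and follows essentially the same route as the paper's own proof: contract $J$ to the finitely generated ideal $J^{c}$, apply the $S$-B\'ezout hypothesis there to get $sJ^{c}\subseteq Aa\subseteq J^{c}$, and extend back using $J^{ce}=J$ to obtain $f(s)J\subseteq Bf(a)\subseteq J$. The only difference is cosmetic: you spell out the identification $(sJ^{c})^{e}=f(s)J$, which the paper uses implicitly when writing $f(s)J=f(s)J^{ce}\subseteq (aA)^{e}$.
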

\begin{proof}
Assume that $A$ is an $S$-B\'ezout ring. Let $J$ be a finitely generated ideal of $B$. From assumption $J^c$ is a finitely generated ideal of $A$ which is $S$-B\'ezout. So, $J^c$ is $S$-principal and therefore there exist $s\in S$ and $a\in A$ such that $sJ^c\subseteq aA\subseteq J^c$. It follows that $f(s)J=f(s)J^{ce}\subseteq (aA)^e\subseteq J^{ce}=J$. Hence, $f(s)J\subseteq f(a)B\subseteq J$, making $J$, an $f(S)$-principal ideal of $B$. Finally, $B$ is an $f(S)$-B\'ezout ring, as desired.\qed\\
\end{proof}

It is clear that if $S_1,...,S_n$ are multiplicative sets of rings $R_1,...,R_n$ respectively, then $S=\prod_{i=1}^{n}S_{i}$ is a multiplicative set of $R=\prod_{i=1}^{n}R_{i}$. Next, we study the stability of the $S$-Bezout property under direct product. \\

\begin{prop}\label{prop2.6}
Let $S_1,...,S_n$ be multiplicative sets of rings $R_1,....,R_n$ respectively. Set $R=\prod_{i=1}^{n}R_{i}$ and $S=\prod_{i=1}^{n}S_{i}$ a multiplicatively closed subset of $R$. The following statements are equivalent: \begin{enumerate}
\item $R$ is an $S$-B\'ezout ring.
\item $R_i$ is an $S_i$-B\'ezout ring for all $i=1,...,n$.
\end{enumerate}
\end{prop}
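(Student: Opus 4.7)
By induction it suffices to treat the case $n=2$, since the general case follows by writing $\prod_{i=1}^{n}R_i = R_1 \times \prod_{i=2}^{n}R_i$ and $S = S_1 \times \prod_{i=2}^{n}S_i$, and then iterating. So I would fix two rings $R_1,R_2$ with multiplicative sets $S_1, S_2$ and prove equivalence of ``$R_1 \times R_2$ is $(S_1\times S_2)$-B\'ezout'' and ``$R_i$ is $S_i$-B\'ezout for $i=1,2$''.

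For $(2)\Rightarrow(1)$, I would start from a finitely generated ideal $I$ of $R=R_1\times R_2$. The key structural observation is that every ideal of a finite direct product splits as $I=I_1\times I_2$ with $I_i$ a (finitely generated) ideal of $R_i$; in fact if $I=\sum_{j=1}^{k}R(a_j,b_j)$ then $I_1=\sum_j R_1 a_j$ and $I_2=\sum_j R_2 b_j$, both finitely generated. By hypothesis there exist $s_i\in S_i$ and $a_i\in R_i$ with $s_iI_i\subseteq R_i a_i \subseteq I_i$ for $i=1,2$. Then with $s=(s_1,s_2)\in S$ and $a=(a_1,a_2)\in R$ one checks
$$sI=s_1I_1\times s_2I_2 \subseteq R_1a_1\times R_2a_2 = R(a_1,a_2) \subseteq I_1\times I_2 = I,$$
so $I$ is $S$-principal.

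For $(1)\Rightarrow(2)$, I would show $R_1$ is $S_1$-B\'ezout (the argument for $R_2$ being symmetric). Take a finitely generated ideal $I_1=R_1a_1+\cdots+R_1a_k$ of $R_1$ and lift it to the finitely generated ideal $J := I_1\times 0 = R(a_1,0)+\cdots+R(a_k,0)$ of $R$. By hypothesis there exist $(s_1,s_2)\in S_1\times S_2$ and $(\alpha,\beta)\in R$ with
$$(s_1,s_2)J \subseteq R(\alpha,\beta) \subseteq J.$$
The right inclusion applied to $(1,1)\cdot(\alpha,\beta)$ forces $\beta=0$, so $R(\alpha,\beta)=R_1\alpha\times 0$; and the left inclusion then reads $s_1I_1\times 0 \subseteq R_1\alpha\times 0 \subseteq I_1\times 0$, i.e.\ $s_1I_1\subseteq R_1\alpha\subseteq I_1$ with $s_1\in S_1$.

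There is no real obstacle here; the only points requiring care are (i) noting that every ideal of a finite product decomposes coordinatewise and that finite generation passes to each coordinate, and (ii) in the converse direction, choosing the lift $I_1\times 0$ (rather than, say, $I_1\times R_2$) so that the second component of the principal generator is forced to be $0$ and the conclusion restricts cleanly to $R_1$.
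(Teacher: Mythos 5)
Your proof is correct and follows essentially the same route as the paper's: reduce to $n=2$, use the coordinatewise decomposition of ideals of a product, and assemble or disassemble the $S$-principal witness $(s_1,s_2)$, $(a_1,a_2)$ coordinate by coordinate. The only (harmless) differences are that you make the decomposition $I=I_1\times I_2$ explicit and, for $(1)\Rightarrow(2)$, lift $I_1$ to $I_1\times 0$ instead of working with a general $I_1\times I_2$ as the paper does.
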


\begin{proof}
It suffices to prove the result for $n =2$. \\ Assume that $R_1\times R_2$ is an $(S_1\times S_2)$-B\'ezout ring and let $I_i$ be a finitely generated ideal of $R_i$, for $i =1, 2$. Then $I_1 \times I_2$ is a finitely generated ideal of an $(S_1\times S_2)$-B\'ezout ring $R_1\times R_2$. Hence, there exist $s_i \in S_i$ and $a_i \in R_i$ such that

$$(s_1,s_2)(I_1 \times I_2) \subseteq (R_1\times R_2)(a_1,a_2) \subseteq I_1 \times I_2,$$
and so $s_1 I_1 \subseteq R_1 a_1 \subseteq I_1 $ and $s_2 I_2 \subseteq R_2 a_2 \subseteq I_2 $. Hence, $R_i$ is an $S_i$-B\'ezout ring, as desired.
Conversely, assume that $R_i$ is an $S_i$-B\'ezout ring for each $i=1,2$ and let $I_1\times I_2$ be a finitely generated ideal of $R_1\times R_2$.  Hence, there exist $s_i \in S_i$ and $a_i \in R_i$ such that $s_1 I_1 \subseteq R_1 a_1 \subseteq I_1 $ and $s_2 I_2 \subseteq R_2 a_2 \subseteq I_2 $ and so we have
$$(s_1,s_2)(I_1 \times I_2) \subseteq (R_1\times R_2)(a_1,a_2) \subseteq I_1 \times I_2.$$
Hence, $R_1\times R_2$ is an $(S_1\times S_2)$-B\'ezout ring which completes the proof.  \qed
\end{proof}
\bigskip
The next proposition studies the $S$-B\'ezout property under the ring extension $A\subseteq B$, where $(A,B)$ is a pair of rings.

\begin{prop}
  Let $A\subseteq B$ be a ring extension such that $IB\cap A=I$ for each ideal $I$ of $A$ and $S\subseteq A$ a multiplicative set. If $B$ is an $S$-B\'ezout ring, then so is $A$.
\end{prop}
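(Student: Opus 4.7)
The plan is to lift the $S$-B\'ezout property from $B$ down to $A$ by an extension--then--contraction argument, dual in spirit to the proof of the previous proposition (which extended an $S$-B\'ezout structure from $A$ to $B$). The hypothesis $IB\cap A=I$ is exactly the mechanism that lets contraction recover the original ideal.

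First I would take an arbitrary finitely generated ideal $I=Aa_1+\cdots+Aa_n$ of $A$ and extend it to $IB=Ba_1+\cdots+Ba_n$, which is finitely generated in $B$. Viewing $S\subseteq A\subseteq B$ as a multiplicative subset of $B$, the $S$-B\'ezout hypothesis on $B$ produces $s\in S$ and $b\in B$ with
$$s(IB)\subseteq bB\subseteq IB.$$
Next I would intersect with $A$. Since $s\in A$ and $I\subseteq A$, we have $sI\subseteq A$ and $sI\subseteq s(IB)\subseteq bB$, giving $sI\subseteq bB\cap A$. On the other side, $bB\subseteq IB$ together with the contraction hypothesis yield $bB\cap A\subseteq IB\cap A=I$. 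Combining,
$$sI\subseteq bB\cap A\subseteq I.$$

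The decisive and, I expect, most delicate step is to convert this chain into one with a principal ideal of $A$ in the middle, possibly at the cost of a further $S$-multiplier. Concretely, one wants $a\in A$ and $s'\in S$ with $s'I\subseteq aA\subseteq I$. The natural attempt is to show that the $B$-generator $b$ may be replaced by an element of $A$: since $b\in IB$, one can write $b=\sum a_i\beta_i$ with $\beta_i\in B$, and then multiplication by $s$ pushes products such as $sa_i=bc_i$ (with $c_i\in B$) back into $A$. Using the hypothesis $IB\cap A=I$ applied to appropriate sub-ideals (for instance, the ideal of $A$ generated by $sa_1,\ldots,sa_n$, or an iterate of this process with $s^k$ in place of $s$), I would aim to pin down a single element $a\in A$ serving as generator. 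Once this is done, $I$ is $S$-principal in $A$; since $I$ was an arbitrary finitely generated ideal, $A$ is $S$-B\'ezout, completing the proof.
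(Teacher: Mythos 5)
Your opening moves coincide with the paper's: extend $I=Aa_1+\cdots+Aa_n$ to $IB$, invoke the $S$-B\'ezout hypothesis on $B$ to get $s\in S$ and $b\in B$ with $sIB\subseteq bB\subseteq IB$, and contract to obtain $sI\subseteq bB\cap A\subseteq IB\cap A=I$. Up to that point the argument is sound. The genuine gap is exactly where you yourself locate it: you never actually produce an element $a\in A$ with $s'I\subseteq aA\subseteq I$. The paragraph beginning ``the natural attempt'' is a plan, not an argument. The devices you float do not converge on a generator: the ideal of $A$ generated by $sa_1,\ldots,sa_n$ is just $sI$, which is no closer to being principal than $I$ was; the identities $sa_i=bc_i$ with $c_i\in B$ involve coefficients outside $A$ and yield no element of $A$ whose multiples trap $I$ up to an $S$-factor; and iterating with $s^k$ only reproduces ideals of the same shape. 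So the step you call ``decisive and most delicate'' is simply absent, and without it the proof does not close.

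For comparison, the paper disposes of this step in one line: since $b\in IB$, it asserts that the generator of the middle principal ideal ``can be picked from $I$''. Once one has a generator $b\in I\subseteq A$, the hypothesis applied to the principal ideal $bA$ gives $bB\cap A=(bA)B\cap A=bA$, and the chain collapses to $sI\subseteq bA\subseteq I$. In other words, the entire content of the proof beyond your first two steps is the replacement of the $B$-generator by a generator lying in $A$ (indeed in $I$), which then makes $bB\cap A$ principal \emph{as an ideal of $A$}. One may well ask whether the paper's ``can be picked from $I$'' is itself adequately justified --- a principal ideal of $B$ inside $IB$ need not obviously admit a generator in $A$ --- but at minimum a complete proof must confront that point head-on, and your proposal acknowledges it only to defer it.
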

\begin{proof}
Let $I$ be a finitely generated ideal of $A$. Since $B$ is an $S$-B\'ezout ring, then there exist $s\in S$ and $b\in B$ such that $sIB\subseteq bB\subseteq IB$. Clearly $b\in IB$ and so can be picked from $I$. Therefore, $sI=sIB\cap A\subseteq bB\cap A\subseteq bA\subseteq I$, making $I$ an $S$-principal ideal of $A$. Hence, $A$ is an $S$-B\'ezout ring, as desired.\qed\\
\end{proof}

Let $R:= A \propto E$ be the trivial ring extension of a ring $A$ by an $A$-module $E$. Note that if $S$ is a multiplicative set of $R$, then $S_0 = \{a \in A / (a,e)\in S$ for some $ e\in E\}$ is a multiplicative set of $A$. Conversely, if $S_0$ is a multiplicative set of $A$, then $S :=S_0 \propto N$ is a multiplicative set of $R$ for every submodule $N$ of $E$ such that $S_{0}N \subseteq N$. In particular, $S_{0} \propto 0$ and $S_{0} \propto E$ are multiplicative sets of $R$. Our next result examines the transfer of the $S$-B\'ezout property in trivial ring extension in the special setting $E$ is a finitely generated $A$-module.\\

\begin{thm}\label{thm2.2}
Let $A$ be a ring, $E$ be a finitely generated $A$-module, $S_0$ be a multiplicatively closed subset of $A$, $R:= A\propto E$ be the trivial ring extension of $A$ by $E$ and $S:= S_0\propto E$ be a multiplicatively closed subset of $R$. If $R$ is an $S$-B\'ezout ring, then $A$ is an $S_0$-B\'ezout ring and every finitely generated submodule $F$ of $E$ is $S$-cyclic (in particular $E$ is $S$-cyclic). The converse holds if every ideal of $R$ is homogeneous.
\end{thm}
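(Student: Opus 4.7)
My strategy is to pass back and forth between finitely generated ideals of $R$ and pairs consisting of a finitely generated ideal of $A$ together with a finitely generated $A$-submodule of $E$, transferring the $S$-principal data coordinatewise via the trivial-extension multiplication, and invoking homogeneity on the converse side to pin down the exact shape of the relevant principal ideal.

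For the forward direction, assume $R$ is $S$-B\'ezout. Given a finitely generated ideal $I=(a_1,\dots,a_n)$ of $A$, I consider the ideal $I\propto E$ of $R$, which is finitely generated precisely because $E$ is finitely generated as an $A$-module. Applying the hypothesis yields $(s,e_0)\in S_0\propto E$ and $(a,f)\in R$ with
\[
(s,e_0)(I\propto E)\subseteq R(a,f)\subseteq I\propto E.
\]
The right inclusion forces $a\in I$; for each $i\in I$ the element $(s,e_0)(i,0)=(si,e_0i)$ lies in $R(a,f)$, so its first coordinate is a multiple of $a$, giving $sI\subseteq Aa\subseteq I$. Hence $A$ is $S_0$-B\'ezout. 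For a finitely generated submodule $F$ of $E$ I instead apply the hypothesis to the finitely generated ideal $0\propto F$ of $R$; the same recipe delivers $(s',e')\in S$ and $(a',f')$ which the right inclusion forces to satisfy $a'=0$ and $f'\in F$, and the left inclusion then yields $s'F\subseteq Af'\subseteq F$, the desired $S$-cyclicity.

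For the converse, take a finitely generated ideal $J$ of $R$. The blanket homogeneity assumption writes $J=I\propto F$ with $IE\subseteq F$; reading off a homogeneous generating set and using that $E$ is finitely generated shows $I$ and $F$ are both finitely generated. Pick $s_1\in S_0$, $a\in I$ with $s_1I\subseteq Aa$, and $s_2\in S_0$, $x\in F$ with $s_2F\subseteq Ax$. The crucial observation is that $R(a,x)$ is itself homogeneous by hypothesis and therefore equals $Aa\propto(Ax+aE)$. Consequently $R(a,x)\subseteq I\propto F=J$ since $a\in I$, $x\in F$, and $aE\subseteq IE\subseteq F$, while for every $(i,y)\in J$,
\[
(s_1s_2,0)(i,y)=(s_1s_2\,i,\,s_1s_2\,y)\in Aa\propto Ax\subseteq R(a,x),
\]
so $(s_1s_2,0)\in S$ witnesses $J$ as $S$-principal.

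The delicate point is the converse step. Without homogeneity the principal ideal $R(a,x)=\{(ra,rx+sa):r\in A,\ s\in E\}$ is typically a strict subset of $Aa\propto(Ax+aE)$; matching second coordinates of $(s_1s_2,0)(i,y)$ against such an element forces a cross term of the form $(s_1u_y-s_2t_i)x$ to land in $aE$, which need not happen. The homogeneity hypothesis is exactly what collapses $R(a,x)$ to its ``box hull'' $Aa\propto(Ax+aE)$ and kills this cross-term obstruction, which is why it is imposed only for the converse.
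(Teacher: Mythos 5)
Your proof is correct and follows essentially the same route as the paper's: for the forward direction you apply the hypothesis to $0\propto F$ and to an ideal with first component $I$ (the paper uses $I\propto IE$ where you use $I\propto E$; both are finitely generated and both work), and for the converse you decompose $J=I\propto F$ via homogeneity and combine the two witnesses through the product $s_1s_2$. Your explicit observation that homogeneity forces $R(a,x)=Aa\propto(Ax+aE)$ is the one step the paper leaves implicit, and it is precisely what legitimizes its final inclusion $(t,0)(I\propto F)\subseteq R(a,e)$.
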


Before proving the previous theorem, we establish the following lemma.

\begin{lem}\label{lem2.3}
Let $A$ be a ring, $E$ be an $A$-module, $F$ be a submodule of $E$ and $S$ be a multiplicatively closed subset of $A$. Then $F$ is $S$-cyclic if and only if $0\propto F$ is an $S\propto E$-principal ideal of $A\propto E$.
\end{lem}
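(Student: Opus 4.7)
The plan is to unpack both definitions and translate the ``$S\propto E$-principal'' condition on $0\propto F$ into an $S$-cyclic condition on $F$. Recall that $F$ is $S$-cyclic means there exist $s\in S$ and $x\in F$ with $sF\subseteq Ax\subseteq F$, while $0\propto F$ being $S\propto E$-principal means there exist $(s,e)\in S\propto E$ and $(a,m)\in A\propto E$ with
$$(s,e)(0\propto F)\ \subseteq\ (A\propto E)(a,m)\ \subseteq\ 0\propto F.$$
The key observation driving both directions is the simple multiplication rule: $(s,e)(0,f)=(0,sf)$, so that $(s,e)(0\propto F)=0\propto sF$ regardless of $e$.

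For the forward direction, given witnesses $s\in S$ and $x\in F$ with $sF\subseteq Ax\subseteq F$, I would take $(s,0)\in S\propto E$ and $(0,x)\in A\propto E$. A direct computation gives $(s,0)(0\propto F)=0\propto sF$ and $(A\propto E)(0,x)=0\propto Ax$, and the required chain of inclusions in $A\propto E$ follows immediately from $sF\subseteq Ax\subseteq F$.

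For the converse, suppose $(s,e)\in S\propto E$ and $(a,m)\in A\propto E$ witness the $S\propto E$-principality. The first step is to exploit the inclusion $(A\propto E)(a,m)\subseteq 0\propto F$: specializing to the element $(1,0)(a,m)=(a,m)$ forces $a=0$ and $m\in F$. Hence $(A\propto E)(a,m)=(A\propto E)(0,m)=0\propto Am$. The second step is to apply the inclusion $(s,e)(0\propto F)\subseteq(A\propto E)(0,m)$, which by the computation $(s,e)(0\propto F)=0\propto sF$ becomes $sF\subseteq Am$. Combined with $Am\subseteq F$ (automatic since $m\in F$), this exhibits $F$ as $S$-cyclic via the witnesses $s\in S$ and $m\in F$.

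There is no serious obstacle here; the only subtlety worth flagging is the need to notice that the ``$e$'' part of $(s,e)$ and the ``$a$'' part of $(a,m)$ are forced (the latter to $0$), so that the apparently richer $S\propto E$-principal data of $0\propto F$ collapses back to ordinary $S$-cyclic data of $F$.
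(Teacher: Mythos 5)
Your proof is correct and follows essentially the same route as the paper's: in the forward direction both use the witnesses $(s,0)$ and $(0,x)$ together with the computation $(s,0)(0\propto F)=0\propto sF$ and $(A\propto E)(0,x)=0\propto Ax$, and in the converse both extract the cyclic generator from the second component of the principal generator after observing that its first component must vanish. Your explicit remark that the inclusion $(A\propto E)(a,m)\subseteq 0\propto F$ forces $a=0$ and $m\in F$ just makes transparent a step the paper leaves implicit.
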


\begin{proof}
Assume that $F$ is $S$-cyclic. Then there exist $s\in S$ and $e\in F$ such that \begin{center}
	$sF\subseteq Ae\subseteq F$,\end{center}
 and so \begin{center} $0\propto sF\subseteq 0\propto Ae\subseteq 0\propto F$,\end{center}
 therefore,\begin{center} $(s,0)0\propto F\subseteq A\propto E(0,e)\subseteq 0\propto F$.\end{center}
Hence, $0\propto F$ is an $S\propto E$-principal ideal of $A\propto E$. Conversely, assume that $0\propto F$ is an $S\propto E$-principal ideal of $A\propto E$. Then there exists $(s,e)\in S\propto E$ such that
\begin{center} $(s,e)0\propto F\subseteq A\propto E(a,e')\subseteq 0\propto F$, for some $(a,e')\in A\propto E$,\end{center}
 and so \begin{center} $0\propto sF\subseteq Aa\propto (Ae'+aE)\subseteq 0\propto F$\end{center}
 therefore, \begin{center} $0\propto sF\subseteq 0\propto Ae'\subseteq 0\propto F$.\end{center} Hence, $sF\subseteq Ae'\subseteq F.$ Finally, $F$ is $S$-cyclic, as desired. \qed
\end{proof}

\bigskip

{\parindent0pt {\bf Proof of Theorem \ref{thm2.2}.\ }}
Assume that $R$ is an $S$-B\'ezout ring. Let $F$ be a finitely generated submodule of $E$. Then $0\propto F$ is a finitely generated ideal of $R$ by \cite[Theorem 2(1)]{MMA} and so is $S$-principal. Therefore, $F$ is $S$-cyclic by Lemma \ref{lem2.3}. On the other hand, consider a finitely generated ideal $I$ of $A$. From \cite[Theorem 7(1)]{MMA}, $I\propto IE$ is a finitely generated ideal of $R$. Consequently, there exist $(s,e)\in S$ and $(a,e')\in I\propto IE$ such that

\begin{center}
$(s,e)I\propto IE\subseteq A\propto E(a,e')\subseteq I\propto IE$,\end{center}
 so, \begin{center} $sI\subseteq Aa\subseteq I$.\end{center}
Hence, $I$ is $S_0$-principal, making $A$, an $S_0$-B\'ezout ring. Conversely, assume that every ideal of $R$ is homogeneous (recall from \cite{AW}, that homogeneous ideals of $R$ have the form $I\propto F$ for some ideal $I$ of $A$ and some submodule $F$ of $E$ with $IE\subseteq F$). Let $I\propto F$ be a finitely generated ideal of $R$. By \cite[Theorem 9(1)]{MMA}, $I$ is a finitely generated ideal of $A$ and $F$ is a finitely generated submodule of $E$. So, there exist $(s,s')\in S_0^{2}$, $a\in I$ and $e\in F$ such that
\begin{center}
 $sI\subseteq Aa\subseteq I$,\end{center}
  and \begin{center} $s' F\subseteq Ae\subseteq F$.\end{center}

 Set $t=ss'\in S$, then

 \begin{center}
  $tI\subseteq Aa\subseteq I$\end{center} and \begin{center}$tF\subseteq Ae\subseteq Ae+aE\subseteq F,$\end{center}
  and so \begin{center} $tI\propto tF\subseteq Aa\propto (Ae+aE)\subseteq I\propto F$,\end{center}therefore,\begin{center} $(t,0)I\propto F\subseteq A\propto E(a,e)\subseteq I\propto F$.\end{center}
  Hence, $I\propto F$ is $S_0\propto E$-principal. Finally, $R$ is an $S$-B\'ezout ring, as desired. \qed
\bigskip


The following theorem investigates the
transfer of $S$-B\'ezout ring property in various context of trivial ring extensions.\\
\begin{thm}\label{thm124} Let $A$ be a ring, $E$ be an $A-$module, $S_0$ be a multiplicatively closed subset of $A$, $R:= A\propto E$ be the trivial ring extension of $A$ by $E$ and $S:= S_0\propto E$ be a multiplicatively closed subset of $R$. Then the following statements hold:
\begin{enumerate}
\item If $R$ is an $S$-B\'ezout ring, then $A$ is an $S_0$-B\'ezout ring.
\item Assume that $A$ is an integral domain which is not a field, $K =qf(A)$, and $R:=A
\propto K$ be the trivial ring extension of $A$ by $K$. Then $R$ is an $S$-B\'ezout ring if and
only if $A$  is an $S_0$-B\'ezout domain.
\item Assume that $(A,M)$ is a local ring and $E$ is an $A-$module
such that $ME=0$ and $S_0 \nsubseteq U(R) (=R-(M\propto E)$. Then $R$ is an $S$-B\'ezout ring if and only if $A$ is an $S_0$-B\'ezout ring.
\end{enumerate}
\end{thm}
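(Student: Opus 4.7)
For part (1), the plan is to reprise one direction of Theorem \ref{thm2.2} without the finiteness hypothesis on $E$: I would pick a finitely generated ideal $I = \sum_{i=1}^n A a_i$ of $A$ and observe that $\sum_{i=1}^n R(a_i,0) = I \propto IE$ is finitely generated in $R$. Applying the $S$-B\'ezout hypothesis gives $(s,e) \in S$ and $(a,e') \in R$ with $(s,e)(I \propto IE) \subseteq R(a,e') \subseteq I \propto IE$; projecting to the first coordinate yields $sI \subseteq Aa \subseteq I$, so $A$ is $S_0$-B\'ezout. Parts (2) and (3) then get their forward directions from (1), and only the converse implications need real work.

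For (2), since $A$ is a domain and $K = qf(A)$, any $(a,k) \in R$ with $a \neq 0$ satisfies $(1, t/a)(a,k) - (1,0)(a,k) = (0,t)$ for every $t \in K$, hence $0 \propto K \subseteq R(a,k)$. Consequently every finitely generated ideal of $R$ is either $I \propto K$ for a nonzero finitely generated ideal $I$ of $A$, or $0 \propto F$ for a finitely generated $A$-submodule $F$ of $K$. The first case is immediate: if $sI \subseteq Aa \subseteq I$ with $s \in S_0$, then $(s,0)(I \propto K) = sI \propto K \subseteq Aa \propto K = R(a,0) \subseteq I \propto K$. For the second, write $F = \sum A f_i$ with $f_i = a_i/b_i$ and set $d = \prod b_i$; then $dF$ is a finitely generated ideal of $A$, so $S_0$-B\'ezoutness yields $s \in S_0$ and $c \in A$ with $s(dF) \subseteq Ac \subseteq dF$. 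Dividing by $d$ shows $F$ is $S_0$-cyclic, and Lemma \ref{lem2.3} then delivers that $0 \propto F$ is $S$-principal.

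For (3), the hypothesis $S_0 \nsubseteq U(R)$ amounts to the existence of $s_0 \in S_0 \cap M$, and hence $s_0 E \subseteq ME = 0$. Given a finitely generated ideal $J = \sum R(a_i,e_i)$ of $R$, I would discard the trivial case where some $a_i$ is a unit (forcing $J = R$) and assume all $a_i \in M$. Setting $I = \sum A a_i \subseteq M$ and choosing $s \in S_0$, $a \in A$ with $sI \subseteq Aa \subseteq I$ by $S_0$-B\'ezoutness, I would verify $(ss_0,0) J \subseteq R(s_0 a, 0) \subseteq J$: the left inclusion follows from $(ss_0,0)(b,g) = (ss_0 b, 0)$ (since $s_0 g \in ME = 0$) combined with $sb \in Aa$, while the right inclusion uses $(s_0,0)(a_i,e_i) = (s_0 a_i, 0) \in J$ to realise $(s_0 a, 0)$ as an $A$-combination of elements of $J$.

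The hardest step is the ideal classification in part (2) --- one has to see that any non-homogeneous finitely generated ideal of $A \propto K$ absorbs $0 \propto K$, and separately handle the submodule case via clearing denominators. The subtle piece in (3) is recognising that the hypothesis $S_0 \nsubseteq U(R)$ is exactly what is needed: it furnishes an element of $S$ whose action annihilates $E$, bridging $S_0$-B\'ezoutness of $A$ and $S$-B\'ezoutness of $R$.
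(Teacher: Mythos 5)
Your proposal is correct and follows essentially the same route as the paper: the projection argument for (1), the case split in (2) according to whether the ideal meets a nonzero first coordinate (with $0\propto K$ absorbed when it does, and denominators cleared when it does not --- the paper writes this as $J=0\propto (1/b)L$), and in (3) the use of an element of $S_0\cap M$ to annihilate $E$ and pass from $sI\subseteq Aa\subseteq I$ to $(ss_0,0)J\subseteq R(s_0a,0)\subseteq J$. The only cosmetic differences are your appeal to Lemma \ref{lem2.3} in the homogeneous case of (2) and your verification of $(s_0a,0)\in J$ through the generators rather than through a preimage $(a,e)\in J$.
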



\bigskip
\begin{proof}
$(1)$ Assume that $R$ is an $S$-B\'ezout ring and let $I$ be a finitely
generated proper ideal of $A$ generated by $(a_i)_{i=1, \ldots ,n}$. Then the finitely generated ideal $J$ of $R$ generated by
$(a_i,0)_{i=1, \ldots ,n}$ is $S$-principal, that is there exist
$(s_0,f) \in S$ and $(a,e) \in R$ such that $$(s_0,f)J \subseteq R(a,e) \subseteq J$$ and so
$$s_0 I \subseteq Aa \subseteq I.$$
Therefore, $I$ is an $S_0$-principal and hence $A$ is an
$S_0$-B\'ezout ring.\\

$(2)$ If $R$ is an $S$-B\'ezout ring, then $A$ is an
$S_0$-B\'ezout domain by assertion $(1)$ above. Conversely, let $J$ be a finitely generated proper ideal of $R$.
Set $I :=\{a \in A / (a,e) \in J$ for some $e \in K\}$. Two cases are then possible: \\
Case 1. $I =0$.\\
Necessarily, $J =0\propto (1/b)L$ for some $b\not=0 \in A$ and some finitely
generated proper ideal $L$ of $A$. Therefore, $L$ is an $S_0$-principal since $A$ is an $S_0$-B\'ezout ring and so there exist $s_0 \in S_0$ and $a\in A$ such
that $$s_0 L \subseteq Aa \subseteq L$$ and we have $$(s_0,0)J \subseteq R(0,a/b) \subseteq J.$$
Hence, $J$ is an $S$-principal ideal of $R$, as desired.\\
Case 2. $I \not= 0$.\\
 Let $(a,e) \in J$ such that $a\not= 0$. Then $(a,e)(0 \propto K) =0 \propto K \subseteq J$;
equivalently, $J  =I \propto IK =I \propto K$, where $I$ is a finitely generated ideal of an $S_0$-B\'ezout ring $A$. Hence, there exist $s_0 \in S_0$ and $a \in A$ such that
$$s_0 I \subseteq Aa \subseteq I,$$ and so we have $$(s_0,0) J \subseteq R(a,0) \subseteq J.$$
Hence, $J$ is an $S$-principal ideal, as desired.\\
$(3)$ If $R$ is an $S$-B\'ezout ring, then A is an $S_0$-B\'ezout ring by assertion $(1)$ above.
Conversely, assume that $A$ is an $S_0$-B\'ezout ring and $S_0 \nsubseteq U(R) (=R-(M\propto E)$, and let $J$ be a finitely generated ideal of $R$.
Set $I :=\{a \in A$ such that $(a,e) \in J$ for some $e \in E\}$ which is a finitely generated ideal of $A$. Since $A$ is an
$S_0$-B\'ezout ring, then there exist $s_0 \in S_0$ and $a \in A$
such that $$s_0 I \subseteq Aa \subseteq I.$$
By multiplying the above equation by an element $s \in S_0 \cap M$, we may assume that $s_0 \in M$ (since we have
$ss_0 I \subseteq Asa \subseteq sI \subseteq I$). On the other hand, there exists $e \in E$ such that $(a,e) \in J$ since $a \in I$. Hence,
$(as_0,0) =(a,e)(s_0,0) \in J$ and so we have
$$(s_0s_0,0)J \subseteq R(s_0,0)(a,e) =R(s_0a,0)  \subseteq J.$$
Therefore, $J$ is an $S$-principal ideal of $R$, which completes the proof.  $\Box$
\end{proof}

\bigskip
 Next, we use the transfer of the $S$-B\'ezout property in the trivial ring extension to provide some new original examples of $S$-B\'ezout rings that are not B\'ezout.

 \begin{exam}\label{exam3.13}
 Let $(A,P)$ be a local B\'ezout domain, $E=A/P$ be an $A$-module such that $Dim_{\frac{A}{P}}(E)\neq 1$ and let $S_0$ be a multiplicatively closed subset of $A$ such that $S_0\cap P\neq \emptyset$. Consider the trivial ring extension of $A$ by $E$ $R:= A\propto E$ and $S:= S_0\propto E$ a multiplicatively closed subset of $R$. Then:
 \begin{enumerate}
 \item $R$ is an $S$-B\'ezout ring.
 \item $R$ is not a B\'ezout ring.
 \end{enumerate}
 \end{exam}
\begin{proof}
$(1)$ Observe that $PE=0$ and $S_0\not\subseteq U(R)$. By assertion $(3)$ of Theorem \ref{thm124}, $R$ is an $S$-B\'ezout ring (as $A$ is an $S_0$-B\'ezout ring).\\
$(2)$ Assume by the way of contradiction that $R$ is a B\'ezout ring. Since $Dim_{\frac{A}{P}}(E)\neq 1$, then $Dim_{\frac{A}{P}}(E)\geq 2$. Pick two elements $h,k\in E$ such that \{h,k\} is $A/P$-linearly independent set and consider the finitely generated ideal $J:=R(0,h)+R(0,k)$ of $R$. One can easily check $J$ is not principal, which is a contradiction. Therefore, $R$ is not a B\'ezout ring.\qed\\
\end{proof}
We denote by $\overline{H}$, the closure integral of integral domain $H$ in its quotient field. Recall from \cite{AZf} that an integral domain $H$ is almost B\'ezout if and only if the integral closure $\overline{H}$ of $H$ is a Pr\"ufer domain with torsion class
group and $H \subseteq \overline{H}$ is a root extension. Now, we show how one can build an example of an $S$-B\'ezout almost B\'ezout ring which is not B\'ezout via trivial ring extension of an integral domain by a vector space over its quotient field, as shown below.

\begin{exam}
Let $F$ be a field of characteristic $p > 0$ and let $F \subseteq L$ be a purely inseparable field
extension. Consider the integral domain $H = F + X L[X],$ $K:=qf(A)$, $R:=H\propto K$ the trivial ring extension of $H$ by $K$ and $S:=S_0\propto E$ be a multiplicative closed subset of $R$, where $S_0:=H-\{0\}$ is a multiplicative closed subset of $H$. Then:
\begin{enumerate}
\item $R$ is an $S$-B\'ezout ring.

\item $R$ is an almost B\'ezout ring.

\item $R$ is not a B\'ezout ring.

\end{enumerate}
\end{exam}

\begin{proof}

$(1)$ By assertion $(2)$ of Theorem \ref{thm124}, $R$ is an $S$-B\'ezout ring, as $H$ is an $S_0$-B\'ezout ring.\\
$(2)$ Note that $\overline{H}=L[X]$ is a Pr\"ufer domain with torsion class group and for each $Q$ in $L[X]$ there exists $n \geq 0$ such that $Q^{p^n} \in H.$ Therefore, $H \subset \overline{H}$ is a root
extension. From \cite[Corollary 4.8(1)]{AZf}, $H$ is an almost B\'ezout domain and so is $R$ \cite[Theorem 3.1(2)]{MaMiMo}.\\
$(3)$ $R$ is not a B\'ezout ring since $H$ is a homomorphic image of $R$ which is not B\'ezout (as $H$ is not integrally closed).\qed\\

\end{proof}

Theorem \ref{thm124} provides a new example of $S$-B\'ezout ring which is not B\'ezout, as shown below.
\begin{exam}\label{exam3.13}
 Let $A$ be a non-B\'ezout integral domain which is not a field, $K=qf(A)$, and $R:=A
\propto K$ be the trivial ring extension of $A$ by $K$. Set $S_0 =A-\{0\}$ and $S :=S_0 \propto E$. Then $R$ is an non-B\'ezout $S$-B\'ezout ring by the assertion $(2)$  Theorem \ref{thm124} (as $A$ is a non-B\'ezout $S_0$-B\'ezout ring).
 \end{exam}

Now, we turn our attention to the transfer of the $S$-B\'ezout ring property
to amalgamation of rings $R :=A\bowtie^{f}J$. It is worthwhile observing that if $S'$ is a  multiplicative closed subset of $R$, then $S_0 = \{a \in A / (a,f(a)+j)\in S'$ for some $ j\in J\}$ is  multiplicative closed subset of $A$. Conversely, if $S_0$ is a multiplicative set of $A$, then $S':=S_{0} \bowtie^{f} 0$ and $S_{0} \bowtie^{f} J$ are multiplicative sets of $R$.\\
\begin{thm}\label{th2.9}
Let $A$ and $B$ be two rings, $J$ an ideal of $B$
and let $f:A\lo B$ be a ring homomorphism. Then:
\begin{enumerate}
\item   If $A\bowtie^fJ$ is an $S'$-B\'ezout ring, then $A$ is an $S_0$-B\'ezout ring.
\item Assume that $f(S_0)\cap J\neq \emptyset$. Then $A\bowtie^fJ$ is an $S'$-B\'ezout ring if and only if $A$ is an $S_0$-B\'ezout ring.
\item  Assume that $f(S_0)\cap J=\emptyset$, $J:=ann(f(S_0))$ and every proper ideal of $A\bowtie^fJ$ is homogenous. Then $A\bowtie^fJ$ is an $S'$-B\'ezout ring if and only if $A$ is an $S_0$-B\'ezout ring.
\end{enumerate}
\end{thm}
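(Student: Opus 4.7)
The statement has three parts: (1) is the ``downward'' direction that $S'$-B\'ezoutness of $R := A \bowtie^f J$ forces $S_0$-B\'ezoutness of $A$, while (2) and (3) are converses under complementary hypotheses on the interaction between $f(S_0)$ and $J$. For (1), my strategy is to lift a finitely generated ideal $I = (a_1,\dots,a_n)$ of $A$ to $\tilde I := \langle (a_i, f(a_i)) \rangle$ of $R$, apply the $S'$-B\'ezout hypothesis to $\tilde I$, and project the resulting $S'$-principal approximation onto the first coordinate; this yields $sI \subseteq aA \subseteq I$ with $s \in S_0, a \in A$, establishing $S_0$-B\'ezoutness of $A$. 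Throughout the converses I will shuttle finitely generated ideals in the opposite direction and tame the cross-terms arising from the amalgamation multiplication by exploiting the stated hypotheses.

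For the converse of (2), I would fix $s_1 \in S_0$ with $f(s_1) \in J$. Given a finitely generated ideal $K = \langle (a_i, f(a_i)+j_i)\rangle$ of $R$, its projection $I$ to $A$ is finitely generated, so $S_0$-B\'ezoutness of $A$ yields $s \in S_0, a \in A$ with $sI \subseteq aA \subseteq I$, and an element $(a, f(a)+j_a) \in K$ for $j_a := \sum f(r_i) j_i$ arising from any decomposition $a = \sum r_i a_i$. The crucial construction is the element $(ss_1, 0) \in R$, which lies in $S_0 \bowtie^f J = S'$ since $-f(ss_1) = -f(s)f(s_1) \in J$; multiplying against any $(b, f(b)+j_b) \in K$ annihilates the entire second coordinate, and the first-coordinate identity $sb = aq$ becomes $ss_1 b = a(s_1 q)$, giving $(ss_1, 0)(b, f(b)+j_b) = (a, f(a)+j_a)(s_1 q, 0)$ inside $R$ (note $(s_1 q, 0) \in R$ because $f(s_1 q) \in J$). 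Hence $(ss_1, 0)\,K \subseteq (a, f(a)+j_a)\,R \subseteq K$.

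For the converse of (3), the annihilator relation $f(s_0)\cdot j = 0$ for all $s_0 \in S_0, j \in J$ replaces the role of $s_1$ above. Homogeneity of proper ideals lets me write $K = I \bowtie^f J_0$ appropriately, so a compatible $(a, f(a)+j_a) \in K$ exists for the $s, a$ produced by $S_0$-B\'ezoutness of $A$. The key multiplier is now $(s^2, f(s^2)) \in S'$: multiplying against $(b, f(b)+j_b) \in K$ and using $f(s^2) j_b = 0$ reduces the product to $(s^2 b, f(s^2 b))$; writing $sb = aq$ gives $s^2 b = a(sq)$ with $(sq, f(sq)) \in R$, and the cross-term $j_a f(sq) = j_a f(s) f(q) = 0$ by the annihilator hypothesis, so $(a, f(a)+j_a)(sq, f(sq)) = (s^2 b, f(s^2 b))$ matches and $(s^2, f(s^2))\,K \subseteq (a, f(a)+j_a)\,R \subseteq K$.

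The main obstacle throughout is controlling the second-coordinate cross-term $f(a) j_{q'} + j_a f(q') + j_a j_{q'}$ in the amalgamation product, which has no counterpart in the trivial extension case. The two hypotheses neutralise it by opposite mechanisms: in (2) by forcing the multiplier's $B$-component to zero via the borrowed element $f(s_1) \in J$, and in (3) by the annihilator relation $J = \mathrm{ann}(f(S_0))$ which directly kills the troublesome products. The subtler role of homogeneity in (3) is to guarantee that the $a \in I$ given by $S_0$-B\'ezoutness lifts to an element $(a, f(a)+j_a) \in K$ with $j_a$ in the appropriate annihilator-compatible piece of $J$, so that a single multiplier $(s^2, f(s^2))$ works uniformly across $K$.
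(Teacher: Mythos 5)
Your proposal is correct and follows essentially the same route as the paper's proof: for (1) you lift and project exactly as the paper does; for (2) you use an element $s_1\in S_0$ with $f(s_1)\in J$ to manufacture a multiplier $(ss_1,0)$ with vanishing second component, which is precisely the paper's $(ts_0,0)$; and for (3) you invoke homogeneity and the relation $J=\mathrm{ann}(f(S_0))$ to kill the cross-terms, as the paper does. The only cosmetic difference is your multiplier $(s^2,f(s^2))$ in (3), needed because you allow a generator $(a,f(a)+j_a)$, whereas the paper takes the generator $(a,f(a))$ (available since $L=I\bowtie^f J$) and gets by with $(s_0,f(s_0))$.
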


\begin{proof}
$(1)$  Assume $R :=A\bowtie^fJ$ is an $S'$-B\'ezout ring and let $I$ be a finitely
generated proper ideal of $A$ generated by $(a_i)_{i=1, \ldots ,n}$. Then the finitely generated ideal $J$ of $R$ generated by
$(a_i,f(a_i))_{i=1, \ldots ,n}$ is $S$-principal, that is there exist
$(s_0,f(s_0)+j) \in S$ and $(a,f(a)+j') \in R$ such that $$(s_0,f(s_0)+j)J \subseteq R(a,f(a)+j') \subseteq J$$ and so
$$s_0 I \subseteq Aa \subseteq I.$$
Therefore, $I$ is $S_0$-principal and hence $A$ is an
$S_0$-B\'ezout ring.\\
$(2)$ Assume that $f(S_0)\cap J\neq \emptyset$. If $A\bowtie^fJ$ is an $S'$-B\'ezout ring, then by assertion $(1)$ above, $A$ is an $S_0$-B\'ezout ring. Conversely, assume that $A$ is $S_0$-B\'ezout. Let $L$ be a finitely generated ideal of $A\bowtie^fJ$. Consider the ideal $I:=\{a\in A / (a,f(a)+j)\in L$ for some $j\in J\}$ of $A$. Since $L$ is a finitely generated proper ideal of $A\bowtie^fJ$, then it is easy to see that $I$ is a finitely generated proper ideal of $A$. Using the fact that $A$ is an $S_0$-B\'ezout ring, then $I$ is an $S_0$-principal ideal of $A$. So, there exist $s_0\in S_0$ and $a\in A$ such that $s_0I\subseteq aA\subseteq I$. From assumption, there is $t\in S_0$ such that $f(t)\in J$. Observe that $ts_0I\subseteq atA\subseteq tI\subseteq I$. Therefore, $(ts_0,0)L\subseteq ts_0I\times 0\subseteq atA\times 0\subseteq (a,f(a))tA\times 0\subseteq (a,f(a))A\bowtie^fJ\subseteq L$, since $f(t)\in J$ and $a\in I$. Hence, $L$ is an $S'$-principal ideal of $A\bowtie^fJ$, making $A\bowtie^fJ$, an $S'$-B\'ezout ring.\\
$(3)$ Assume that $f(S_0)\cap J=\emptyset$, $J:=ann(f(S_0))$ and every proper ideal of $A\bowtie^fJ$ is homogenous. If $A\bowtie^fJ$ is an $S$-B\'ezout ring, then by assertion $(1)$ above, $A$ is an $S_0$-B\'ezout ring. Conversely, assume that $A$ is an $S_0$-B\'ezout ring. Let $L$ be a finitely generated proper ideal of $A\bowtie^fJ$. From assumption, $L:=I\bowtie^fJ$ for some finitely generated proper ideal $I$ of $A$. Since $A$ is an $S_0$-B\'ezout ring, then there exist $s_0\in S_0$ and $a\in A$ such that $s_0I\subseteq aA\subseteq I$. We claim that $(s_0,f(s_0))L\subseteq (a,f(a))A\bowtie^fJ\subseteq L$. Indeed, let $(i,f(i)+k)\in L$. Then $s_0i=ar$ for some $r\in A$. So, $(s_0,f(s_0))(i,f(i)+k)=(s_0i,f(s_0i)+kf(s_0))=(s_0i,f(s_0i))=(ar,f(ar))=(a,f(a))(r,f(r))\in (a,f(a))A\bowtie^fJ\subseteq L$. Hence, it follows that $I\bowtie^fJ$ is an $S'$-principal ideal of $A\bowtie^fJ$. Finally, $A\bowtie^fJ$ is an $S'$-B\'ezout ring, as desired.\qed\\
\end{proof}

Next, let $I$ be a \emph{proper} ideal of $A$.  The (amalgamated) duplication of $A$ along $I$ is a special amalgamation given by $$A\bowtie I:=A\bowtie^{id_{A}} I=\big\{(a,a+i)/ a\in A, i\in I\big\}.$$ The following corollary is an immediate consequence of Theorem \ref{th2.9} on the transfer of the $S$-B\'ezout property into duplications.
\begin{cor}
Let $A$ be a ring and $I$ be an ideal of $A$. Then:
\begin{enumerate}
\item If $A\bowtie I$ is an $S'$-B\'ezout ring, then $A$ is an $S_0$-B\'ezout ring.
\item Assume that $S_0\cap I\neq \emptyset$. Then $A\bowtie I$ is an $S'$-B\'ezout ring if and only if so is $A$ is an $S_0$-B\'ezout ring.
\item Assume that $S_0\cap I=\emptyset$, $I:=Ann(f(S_0))$ and every proper ideal of $A\bowtie I$ is homogenous. Then $A\bowtie I$ is $S'$-B\'ezout if and only if $A$ is an $S_0$-B\'ezout ring.
\end{enumerate}

\end{cor}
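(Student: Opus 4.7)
The plan is to derive this corollary by directly specializing Theorem \ref{th2.9} to the case where $B := A$, $f := \mathrm{id}_A$, and $J := I$. Under this identification, $A \bowtie^{f} J$ coincides with the duplication $A \bowtie I$ by definition, and $f(S_0) = S_0$, so the hypotheses of the three parts of Theorem \ref{th2.9} translate verbatim into the hypotheses of the three parts of the corollary (in particular $f(S_0) \cap J \neq \emptyset$ becomes $S_0 \cap I \neq \emptyset$, and $J = \mathrm{ann}(f(S_0))$ becomes $I = \mathrm{Ann}(S_0)$). Each conclusion of the theorem then specializes to the corresponding conclusion of the corollary.

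Concretely, for part (1), I would apply Theorem \ref{th2.9}(1): if $A \bowtie I$ is $S'$-B\'ezout, then $A$ is $S_0$-B\'ezout, with no additional hypothesis needed. For part (2), I would apply Theorem \ref{th2.9}(2): under $S_0 \cap I \neq \emptyset$, the property $S'$-B\'ezout for $A \bowtie I$ is equivalent to $A$ being $S_0$-B\'ezout. For part (3), I would apply Theorem \ref{th2.9}(3): under $S_0 \cap I = \emptyset$, $I = \mathrm{Ann}(S_0)$, and the assumption that every proper ideal of $A \bowtie I$ is homogeneous, the same equivalence holds.

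Since the corollary is presented as an immediate consequence of the theorem, there is no real obstacle to this approach; the only point that deserves a brief verbal remark is the translation of $\mathrm{ann}(f(S_0))$ to $\mathrm{Ann}(S_0)$ when $f = \mathrm{id}_A$, and the observation that a multiplicative set $S'$ of $A \bowtie I$ compatible with $S_0$ (in the sense of the paragraph preceding Theorem \ref{th2.9}) is given by $S_0 \bowtie 0$ or $S_0 \bowtie I$, which play the same role as $S_0 \bowtie^{f} 0$ and $S_0 \bowtie^{f} J$ in the amalgamated setting. Given this, the proof reduces to the single sentence: apply Theorem \ref{th2.9} with $B = A$, $f = \mathrm{id}_A$, and $J = I$.
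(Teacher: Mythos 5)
Your proposal is correct and is exactly the paper's approach: the paper offers no separate proof, presenting the corollary as an immediate consequence of Theorem \ref{th2.9} via the specialization $B=A$, $f=\mathrm{id}_A$, $J=I$, which is precisely what you do. Your remark that $\mathrm{ann}(f(S_0))$ becomes $\mathrm{Ann}(S_0)$ under $f=\mathrm{id}_A$ is a sensible clarification of the slightly sloppy statement of part (3), but nothing more is needed.
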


Theorem \ref{th2.9} enriches the current literature with a new original class of $S$-B\'ezout rings which are not B\'ezout. The following examples show how to construct such rings.

\begin{exam}
Let $A$ be any $S$-B\'ezout ring which is not B\'ezout (for instance take $A:=B\propto E$ be the trivial ring extension of $B$ by $E$, $B:=\Z$ be the ring of integers, $E:=(\Z/2\Z)^\infty$ be a $\Z/2\Z$-vector space, see example \ref{ex}). Consider the surjective ring homomorphism $f:A\rightarrow B$ and $S_0:=\{(2^n,0)/n\in \N\}$ be a multiplicative set of $A$ and $J:=2\Z$ be an ideal of $B$. Then:
\begin{enumerate}
\item $A\bowtie^fJ$ is an $S'$-B\'ezout ring.
\item $A\bowtie^fJ$ is not a B\'ezout ring.
\end{enumerate}
\end{exam}
\begin{proof}
$(1)$ First, observe that $f(S_0)\cap J\neq \emptyset$. Since $A$ is an $S_0$-B\'ezout ring (by Example \ref{ex}), then by assertion $(2)$ of Theorem \ref{th2.9}, it follows that $A\bowtie^fJ$ is an $S'$-B\'ezout ring.\\
$(2)$ We claim that $A\bowtie^fJ$ is not a B\'ezout ring. Indeed, $A$ which is a homomorphic image of $A\bowtie^fJ$ is not B\'ezout and the fact that the B\'ezout property is stable under factor ring, it follows that $A\bowtie^fJ$ is not a B\'ezout ring.\qed\\
\end{proof}

\begin{exam}
Let $A:=A_1\times A_2$ be a ring (for instance $A:=\Z\times K[X,Y]$ where $\Z$ is the ring of integers and $K[X,Y]$ the polynomial ring in two variables $X,Y$ over a field $K$). Consider the multiplicative sets $S_1:=\{3^n/n\in \N\}$ and $S_2:=K[X,Y]-\{0\}$ of $\Z$ and $K[X,Y]$, respectively, the surjective ring homomorphism $f:\Z\times K[X,Y]\rightarrow \Z$ defined by $f((a,P))=a$ and the ideal $J:=3\Z$ of $\Z$. Let $S=S_1\times S_2$ be a multiplicative set of $A$ and $S':=S\bowtie^f 0$ be a multiplicative set of $A\bowtie^fJ$. Then:
\begin{enumerate}
\item $A\bowtie^fJ$ is an $S'$-B\'ezout ring.
\item $A\bowtie^fJ$ is not a B\'ezout ring.
\end{enumerate}
\end{exam}

\begin{proof}
$(1)$ First note that $A:=\Z\times K[X,Y]$ is an $S_1\times S_2$-B\'ezout ring (by Proposition \ref{prop2.6}, since $\Z$ is an $S_1$-B\'ezout ring and $K[X,Y]$ is an $S_2$-B\'ezout ring). On the other hand, $f(S)\cap J=S_1\cap J\neq \emptyset$. Hence, by assertion $(2)$ of Theorem \ref{th2.9}, $A\bowtie^fJ$ is an $S'$-B\'ezout ring.\\
$(2)$ $A\bowtie^fJ$ is not a B\'ezout ring, since $A$ is not a B\'ezout ring (as $K[X,Y]$ is not a B\'ezout domain).\qed

\end{proof}

The next example shows that the class of $S$-B\'ezout rings and the class of almost B\'ezout rings are distinct in general and we use Theorem \ref{th2.9} to construct a new original class of $S$-B\'ezout rings which are not almost B\'ezout.

\begin{exam}
Let $A$ be any integral domain which is not a field, $f=id_A$ be the identity ring homomorphism and let
$J:=m$ be a maximal ideal of $A$. Consider the multiplicative closed subset $S_0:=A-\{0\}$ of $A$. Then:
\begin{enumerate}
\item $A\bowtie^fJ$ is an $S'$-B\'ezout ring.
\item $A\bowtie^fJ$ is not an almost B\'ezout ring.
\end{enumerate}
\end{exam}
\begin{proof}
$(1)$ Using a similar argument as in Example \ref{0}, it follows that $A$ is  an $S'$-B\'ezout ring. On the other hand $f(S_0)=f(A-\{0\})\cap J=A-\{0\}\cap m \neq \emptyset$. Hence, by assertion $(2)$ of Theorem \ref{th2.9}, it follows that $A\bowtie^fJ$ is an $S'$-B\'ezout ring.\\
$(2)$ From \cite[Corollary 2.5]{NaMo}, $A\bowtie^fJ$ is not an almost B\'ezout ring, as $J\neq 0$.\qed\\
\end{proof}

Now, we introduce the concept of nonnil $S$-B\'ezout ring.

\begin{defn}
Let $A\in \mathcal{H}$ be a ring and $S\subseteq A$ a multiplicative set. $A$ is called a nonnil $S$-B\'ezout ring if for every nonnil finitely generated ideal $I$ of $A$, we have $I$ is an $S$-principal ideal of $A$.
\end{defn}
 Next, we establish a characterization of nonnil $S$-B\'ezout property. For every multiplicative set $S$ of ring $A$, set $S'=\frac{S}{Nil(A)}=\{s+Nil(A)\mid s\in S \}$. It is easy to see that $S'$ is a multiplicative set of $\frac{A}{Nil(A)}$.
\begin{thm}\label{thmo}
 Let $A\in \mathcal{H}$ be a ring and $S\subseteq A$ a multiplicative set. Then $A$ is a nonnil $S$-B\'ezout ring if and only if $\frac{A}{Nil(A)}$ is an $S'$-B\'ezout domain.
\end{thm}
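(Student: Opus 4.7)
The plan is to transfer the Bézout property across the canonical quotient $\pi\colon A \to A/Nil(A)$, exploiting two key consequences of the hypothesis $A\in\mathcal{H}$: first, $A/Nil(A)$ is a domain because $Nil(A)$ is prime, and second, $Nil(A)$ is divided, meaning $Nil(A)\subseteq aA$ for every $a\in A\setminus Nil(A)$. A recurring observation will be that nonnil ideals of $A$ correspond bijectively to nonzero ideals of $A/Nil(A)$, since every nonnil ideal automatically contains $Nil(A)$.

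For the forward direction, I would take a finitely generated ideal $\bar{J}=J/Nil(A)$ of $A/Nil(A)$, where $J$ is a finitely generated ideal of $A$ containing $Nil(A)$ (if $\bar{J}=0$ the claim is trivial). Then $J$ is nonnil, and the pair $J\subseteq A$ satisfies the hypothesis of the nonnil $S$-Bézout property, since $A=(1)$ is trivially $S$-principal and nonnil. This produces $s\in S$ and $a\in A$ with $sJ\subseteq aA\subseteq J$. Reducing modulo $Nil(A)$ yields $\bar{s}\bar{J}\subseteq (\bar{a})\subseteq \bar{J}$, so $\bar{J}$ is $S'$-principal.

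For the converse, I would take a pair of finitely generated nonnil ideals $I\subseteq P$ of $A$ with $P$ an $S$-principal ideal, and aim to show $I$ is $S$-principal. Since $I$ is nonnil, $\bar{I}$ is a finitely generated nonzero ideal of the $S'$-Bézout domain $A/Nil(A)$, so there exist $\bar{s}\in S'$ and $\bar{a}\in\bar{I}$ with $\bar{s}\bar{I}\subseteq (\bar{a})\subseteq \bar{I}$. The main technical step, and the principal obstacle, is lifting this containment back to $A$. I would pick a representative $a\in I$ of $\bar{a}$ that lies outside $Nil(A)$, using that $I$ is nonnil (and adjusting $\bar{a}$ if necessary in the degenerate case where the chosen representative falls in $Nil(A)$). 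The divided property then gives $Nil(A)\subseteq aA$. For any lift $s\in S$ of $\bar{s}$ and any $i\in I$, the relation $\bar{s}\bar{i}\in(\bar{a})$ translates to $si\in aA+Nil(A)=aA$, which yields $sI\subseteq aA\subseteq I$ and establishes $S$-principality of $I$.

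The main obstacle is precisely the lifting argument, where the divided hypothesis on $Nil(A)$ is essential: without it, the containment $sI\subseteq aA+Nil(A)$ cannot be upgraded to $sI\subseteq aA$. One should note that the hypothesis that $P$ be $S$-principal is not actually exploited in the converse; the argument proves directly that every finitely generated nonnil ideal of $A$ is $S$-principal, which is a stronger statement equivalent to the stated definition (as the special case $P=A$ already captures the full strength).
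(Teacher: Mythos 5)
Your proof is correct and follows essentially the same route as the paper: reduce modulo $Nil(A)$, use that nonnil ideals of a $\phi$-ring contain $Nil(A)$, and invoke the divided property of $Nil(A)$ to lift $S'$-principality back to $S$-principality. Your choice of $P=A$ in the forward direction and the identity $aA+Nil(A)=aA$ in the lifting step are slightly more streamlined than the paper's computation with the unit $1+k$, but the underlying argument is the same.
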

\proof Assume that $A$ is a nonnil $S$-B\'ezout ring and let
$\frac{I}{Nil(A)}$ be finitely generated ideal of
$\frac{A}{Nil(A)}$. We aim to show that $\frac{I}{Nil(A)}$ is an $S'$-principal ideal of $\frac{A}{Nil(A)}$. We may assume that $\frac{I}{Nil(A)}\neq 0$. Hence, $I$ is a nonnil ideal of $A$. By \cite[Lemma 2.4]{aybad}, $I$ is a finitely generated ideal of the nonnil $S$-B\'ezout ring $A$ since $\frac{I}{Nil(A)}$ be finitely generated ideal of
$\frac{A}{Nil(A)}$. So, $I$ is an $S$-principal ideal of $A$. Therefore, there exists $s\in S$ such that $sI\subseteq aA\subseteq I$ for some $a\in A$. It follows that there exist $s'\in S'$ and $\bar{a}\in \frac{A}{Nil(A)} $ such that $s'\frac{I}{Nil(A)}\subseteq \bar{a}\frac{A}{Nil(A)}\subseteq
\frac{I}{Nil(A)}$ with $s'=s+Nil(A)$ for some $s\in S$. So,
$\bar{a}\frac{A}{Nil(A)} = (ab + Nil(A))$ for some
$b\in A$. Hence, $\frac{I}{Nil(A)}$ is an $S'$-principal ideal of $\frac{A}{Nil(A)}$, making $\frac{A}{Nil(A)}$, an $S'$-B\'ezout domain. Conversely, assume that $\frac{A}{Nil(A)}$ is an $S'$-B\'ezout domain and let $I$ be a nonnil ideal finitely generated of $A$. By \cite[Lemma 2.4]{aybad}, $\frac{I}{Nil(A)}$ is a nonzero finitely generated ideal of the $S'$-B\'ezout ring $\frac{A}{Nil(A)}$. So, $\frac{I}{Nil(A)}$ is an $S'$-principal ideal of $\frac{A}{Nil(A)}$. Therefore, there exist $s'\in S'$ and $\bar{a}\in \frac{A}{Nil(A)} $ such that
$s'\frac{I}{Nil(A)}\subseteq \bar{a}\frac{A}{Nil(A)}\subseteq
\frac{I}{Nil(A)}$ with $s'=s+Nil(A)$ for some $s\in S$. So,
$\bar{a}\frac{A}{Nil(A)} = (ab + Nil(A))$ for some
$b\in A$. We have $\bar{a}\frac{A}{Nil(A)}\cong \frac{J}{Nil(A)}$. Our aim is to show that $J$ is a principal ideal of $A$ generated by $a$. Consider a nonnilpotent element $x$ of $J$. Then $x + Nil(A) = ab + Nil(A)$ in
$\frac{A}{Nil(A)}$ for some $b$ in $A$. Therefore, there exists $w
\in Nil(A)$ such that $x + w = ab$ in $A$.
Since $x$ is nonnilpotent, $w=xk$ for some $k\in Nil(A)$. So, $x+w =
x + xk = x(1 + k) =ab$. Using the fact that
$k\in Nil(A)$ and so $1 + k\in U(A)$, it follows that $x \in aA$ and so $J$ is a principal ideal of $A$ generated by $a$. One can
easily check that $sI\subseteq aA=J\subseteq I$ and so $I$ is an
$S$-principal ideal of $A$. Consequently, $A$ is a nonnil $S$-B\'ezout ring, as desired.\qed\\

As a consequence of Theorem \ref{thmo}, we establish the following characterization of nonnil
$S$-B\'ezout rings.
\begin{cor}
Let $A\in \mathcal{H}$ be a ring and $S\subseteq A$ be a
multiplicative set. Then $A$ is a nonnil $S$-B\'ezout ring if
and only if $\phi(A)$ is a nonnil $\phi(S)$-B\'ezout ring.
\end{cor}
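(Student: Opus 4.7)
The plan is to reduce the corollary to Theorem \ref{thmo} applied separately to $A$ and to $\phi(A)$, and then to identify the two resulting characterizations via a canonical isomorphism. The introduction records that $\phi(A)\in\mathcal{H}$, so Theorem \ref{thmo} is available for $\phi(A)$ with the multiplicative set $\phi(S)$, yielding that $\phi(A)$ is nonnil $\phi(S)$-B\'ezout iff $\phi(A)/Nil(\phi(A))$ is a $(\phi(S))'$-B\'ezout domain. Simultaneously, Theorem \ref{thmo} applied to $A$ gives that $A$ is nonnil $S$-B\'ezout iff $A/Nil(A)$ is an $S'$-B\'ezout domain.

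The core step is to construct a ring isomorphism
$$\bar{\phi}:A/Nil(A)\longrightarrow \phi(A)/Nil(\phi(A)), \qquad a+Nil(A)\mapsto \phi(a)+Nil(\phi(A)),$$
and check that it transports $S'$ bijectively onto $(\phi(S))'$. Well-definedness and surjectivity follow from the facts recorded in the introduction that $\phi:A\to\phi(A)$ is a surjective ring homomorphism with $Ker(\phi)\subseteq Nil(A)$ and $\phi(Nil(A))=Nil(\phi(A))$. Injectivity then comes from the same two inclusions: if $\phi(a)\in Nil(\phi(A))=\phi(Nil(A))$, then $\phi(a)=\phi(n)$ for some $n\in Nil(A)$, so $a-n\in Ker(\phi)\subseteq Nil(A)$, whence $a\in Nil(A)$. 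By construction $\bar{\phi}$ sends each coset $s+Nil(A)$ with $s\in S$ to $\phi(s)+Nil(\phi(A))$, so it carries $S'$ onto $(\phi(S))'$ as sets.

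Combining the two instances of Theorem \ref{thmo} with the isomorphism $\bar{\phi}$ yields the desired equivalence, since $S'$-B\'ezoutness of a domain is preserved by any ring isomorphism that matches the corresponding multiplicative sets (every finitely generated ideal pulls back to a finitely generated ideal, and $S$-principality is phrased purely in ring-theoretic terms). I do not anticipate a serious obstacle, since everything hinges on properties of $\phi$ already collected in the introduction; the only mild bookkeeping point is verifying injectivity of $\bar{\phi}$, which I outlined above.
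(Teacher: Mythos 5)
Your proposal is correct and follows essentially the same route as the paper: both reduce the statement to two applications of Theorem \ref{thmo} together with the canonical isomorphism $A/Nil(A)\cong \phi(A)/Nil(\phi(A))$ matching $S'$ with $(\phi(S))'$. The only difference is that you verify this isomorphism directly from the properties of $\phi$ recorded in the introduction, whereas the paper cites it from the literature (Lemma 2.1 of Badawi's paper on nonnil-Noetherian rings).
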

\proof Assume that $A$ is a nonnil $S$-B\'ezout ring. By Theorem \ref{thmo}, $\frac{A}{Nil(A)}$ is an $S'$-B\'ezout domain with $S'=\frac{S}{Nil(A)}$. From \cite[Lemma 2.1]{ayb6}, $\frac{A}{Nil(A)}\cong\frac{\phi(A)}{Nil(\phi(A))}$ and therefore $\frac{\phi(A)}{Nil(\phi(A))}$ is a $\frac{\phi(S)}{Nil(\phi(S))}$-B\'ezout domain. Hence, $\phi(A)$ is a nonnil $\phi(S)$-B\'ezout ring by Theorem \ref{thmo}. The converse holds using similar argument as previously.\qed\\

The following example illustrates Theorem \ref{thmo} by generating a new original class of nonnil $S$-B\'ezout ring which is not B\'ezout.
\begin{exam}
Let $A$ be any non-B\'ezout $S$-B\'ezout domain which is not a field with
quotient field $K$ and let $S\subseteq A$ be a multiplicative set.
Let $K$ be the quotient field of $A$. Then $A\propto K$ is a nonnil
$S$-B\'ezout ring which is not B\'ezout.
\end{exam}
\proof Set $R=A\propto K$. First, $Nil(R) = 0\propto K$ is a divided
prime ideal of $R$. Indeed, let $(0, e) \in Nil(R)$ and $(a, f) \in
R \setminus Nil(R)$. Then $(0, e) = (a, f)(0, \frac{e}{a})$ and thus
$R \in \mathcal{H}$. Since $\frac{R}{Nil(R)}$ is ring-isomorphic to
$A$, $R$ is a nonnil $S$-B\'ezout ring by Theorem \ref{thmo}. Furthermore, $R$ is not a B\'ezout ring.\qed\\

The following theorem establishes a characterization of nonnil $S$-B\'ezout rings in a special setting of pullback.
\begin{thm}
Let $A \in \mathcal{H}$ and $S\subseteq A$ a multiplicative set. Then $A$ is a nonnil $S$-B\'ezout ring if and only if $\phi(A)\cong R$, where $R$ is obtained from the following pullback diagram:
\[
 \xymatrix{
R \ar[d]  \ar[r] & B\ar[d] \\
T \ar[r] & T/M
}
\]
where $T$ is a zero-dimensional quasilocal ring with maximal ideal $M$, $B:=R/M$
is a $S_1$-B\'ezout subring of $T/M$ with $S_1=\alpha(\phi(S))/M$ such that $\alpha$ is the ring isomorphism from $\phi(A)$ to $R$, the vertical arrows are the usual inclusion maps, and
the horizontal arrows are the usual surjective maps.
\end{thm}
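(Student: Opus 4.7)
The strategy is to display an explicit pullback diagram in which $\phi(A)$ itself is the corner ring $R$, and then reduce the $S$-B\'ezout condition to the $S_1$-B\'ezout condition on the residue subring via Theorem \ref{thmo}.

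Take $T:=T(\phi(A))=A_{Nil(A)}$. By the discussion at the end of the introduction, $T$ is a zero-dimensional quasilocal ring whose maximal ideal $M:=Nil(\phi(A))=\phi(Nil(A))$ equals $Z(\phi(A))$ and $T/M$ is the quotient field of the domain $\phi(A)/Nil(\phi(A))$. Set $B:=\phi(A)/M$, regarded as a subring of the field $T/M$ via the natural inclusion, and let $\pi:T\to T/M$ be the canonical surjection. I would first show that
\[
\phi(A)=\pi^{-1}(B),
\]
so that $\phi(A)$ literally \emph{is} the pullback of $T\to T/M\leftarrow B$. The inclusion $\phi(A)\subseteq\pi^{-1}(B)$ is immediate. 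Conversely, if $x\in T$ satisfies $\pi(x)\in B$, then $x+M=y+M$ for some $y\in\phi(A)$, hence $x-y\in M\subseteq\phi(A)$ and $x\in\phi(A)$. Let $\alpha$ denote the resulting identification of $\phi(A)$ with this pullback $R$; then $Nil(R)=R\cap Nil(T)=R\cap M=M$, so $R/M=B$ and $S_1=\alpha(\phi(S))/M$ is the image of $\phi(S)$ in $B$.

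To pass between the conditions on $A$ and on $B$, I would use the chain of ring isomorphisms
\[
A/Nil(A)\;\cong\;\phi(A)/Nil(\phi(A))\;\cong\;R/M\;=\;B
\]
coming from \cite[Lemma 2.1]{ayb6} and from $\alpha$. Under this chain, the multiplicative set $S'=S/Nil(A)$ of $A/Nil(A)$ corresponds to $\phi(S)/Nil(\phi(A))$ and hence to $S_1=\alpha(\phi(S))/M$. By Theorem \ref{thmo}, $A$ is nonnil $S$-B\'ezout if and only if $A/Nil(A)$ is an $S'$-B\'ezout domain, which by the displayed isomorphism is equivalent to $B$ being an $S_1$-B\'ezout (sub)ring of $T/M$. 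This gives both directions simultaneously: if $A$ is nonnil $S$-B\'ezout, take $T,M,B,\alpha$ as above to realize $\phi(A)$ as the required pullback; conversely, if $\phi(A)\cong R$ arises from such a pullback, then $M=Nil(R)$ forces $B=R/M$ to play the role of $A/Nil(A)$ and $S_1$ the role of $S'$, so the $S_1$-B\'ezout hypothesis on $B$ yields, via Theorem \ref{thmo}, that $A$ is nonnil $S$-B\'ezout.

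The routine computations are straightforward; the main point to be careful about is the identification $Nil(R)=M$, which depends on $T$ being zero-dimensional and quasilocal (so $M=Nil(T)$), together with verifying that the pullback description automatically forces $M\subseteq R$. Once this is cleanly established, matching the multiplicative sets $S,S',S_1$ under the three-way isomorphism is bookkeeping, and the equivalence follows directly from Theorem \ref{thmo}.
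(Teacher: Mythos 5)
Your proposal is correct and follows essentially the same route as the paper: take $T=A_{Nil(A)}$, $M=Nil(\phi(A))$, $R=\phi(A)$, and translate the nonnil $S$-B\'ezout condition through Theorem \ref{thmo} via the isomorphism $A/Nil(A)\cong\phi(A)/Nil(\phi(A))=R/M=B$. You actually supply details the paper's very terse proof omits (the explicit verification that $\phi(A)=\pi^{-1}(B)$ and the matching of the multiplicative sets), which is a welcome addition rather than a deviation.
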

\proof Assume that $\phi(A)\cong R$
obtained from the given diagram. Then $R \in \mathcal{H}$ and
$Nil(R) = Z(R) = M$. Since $R/M$ is an $S_1$-B\'ezout domain,
$R$ is a nonnil $S_2$-B\'ezout ring by Theorem \ref{thmo},
where $S_2=\alpha(\phi(S))$, and so $\phi(A)$ is a nonnil $\phi(S)$-B\'ezout ring. Hence, $A$ is a nonnil
$S$-B\'ezout ring. Conversely, assume that $A$ is a nonnil
$S$-B\'ezout ring. Set $T = A_{Nil(A)}$,
$M = Nil(A_{Nil(A)})$, and $R = \phi(A)$ yields the desired pullback diagram. \qed\\

\begin{section}{Conflict of Interest}

On behalf of all authors, the corresponding author states that there is no conflict of interest.\\
\end{section}


\begin{thebibliography}{99}
\bibitem{MMA} Ali, M. M. (2006) {\em Idealization and Theorem of D. D. Anderson}. Commun. Algebra. 34, 4479-4501.

\bibitem{ayDF} Anderson, D.F., Badawi, A. (2004). \textit{On $\phi$-Pr\"ufer and $\phi$-B\'ezout rings}.  Houston J. Math. 30 (2004), no. 2, 331-343.

\bibitem{AZf} Anderson D.D., Zafrullah M. (1991). \textit{Almost Bezout domains}. J. Algebra 142, no. 2, 285-309.

\bibitem{AD}    Anderson, D.D., Dumitrescu, T. (2002). {\em S-Noetherian rings}. Commun. Algebra 30, 4407-4416.
\bibitem{AW}   Anderson, D.D.,  Winders, M. (2009). {\em Idealization of a module}. J. Commut. Algebra 1(1), 3-56.



\bibitem{ayb4} Badawi, A. (1995). \textit{On divided rings and $\phi$-pseudo-valuation rings}. Commutative Ring Theory, 57--67, Lecture Notes in Pure and Appl. Math., 185, Dekker, New York, 1997.

\bibitem{ayb2} Badawi, A. (1999). \textit{On $\phi$-pseudo-valuation rings}, Lecture Notes Pure Appl. Math.  205, 101-110, Marcel Dekker, New York/Basel.

\bibitem{ayb3} Badawi, A. (2001). \textit{On $\phi$-chained rings and $\phi$-pseudo-valuation rings}. Houston J. Math. 27,
no. 4, 725-736.

\bibitem{ayb5} Badawi, A. (2000). \textit{On $\phi$-pseudo-valuation rings II}. Houston J. Math. 26, no. 3, 473-480.

\bibitem{aybad} Badawi, A. (2005). \textit{Factoring nonnil ideals into prime and invertible ideals}. Bull. London Math. Soc. 37, no. 5, 665-672.


\bibitem{ayb6} Badawi, A. (2003). \textit{On nonnil-Noetherian rings}. Commun. Algebra 31 no. 4, 1669-1677.

\bibitem{ayb1} Badawi, A. (1999). \textit{On divided commutative rings}. Commun. Algebra 27, 1465-1474.


\bibitem{BE} Bennis, D., El Hajoui, M. (2018). \textit{On S-coherence}. J. Korean Math. Soc. 55(6), 1499-1512.

     
 \bibitem{CJKM} Chhiti, M., Mahdou, S. E. (2023). \textit{When every regular ideal is S-finite}. Quaest. Math. published online, DOI: 10.2989/16073606.2023.2248550.


\bibitem{DFF} D'Anna, M., Finocchiaro, C.A., Fontana, M. (2010). \textit{Properties of chains of prime ideals in
amalgamated algebras along an ideal}. J. Pure Applied Algebra 214, 1633-1641.

\bibitem{D}   D'Anna, M. (2006). \textit{A construction of Gorenstein rings}. J. Algebra  306 (6), 507--519.

\bibitem{DF} D'Anna, M., Fontana, M. (2007). \textit{An amalgamated duplication of a ring along an ideal: the basic properties}. J. Algebra Appl. 6 (3), 443-459.



\bibitem{EKM} El Khalfi, A., Kim, H., Mahdou, N. (2022). \textit{Amalgamation extension in commutative ring theory : a survey}.
Moroccan Journal of Algebra and Geometry with Applications 1(1), 139-182.


\bibitem{G} Glaz, G. (1989). \textit{Commutative Coherent Rings}. Lecture Notes in Math, 1371, Springer-Verlag, Berlin.


\bibitem{H} Huckaba, J.A. (1988). \textit{Commutative Rings with Zero Divisors}. Dekker, New York.


\bibitem{KM} Kabbaj, S., Mahdou, N. (2004). \textit{Trivial extensions defined by coherent-like conditions}.
Commun. Algebra 32(1), 3937-3953.




\bibitem{MaMiMo} Mahdou N., Mimouni A. and M. A. S. Moutui (2015). \textit{On almost valuation and almost B\'ezout rings}. Commun. Algebra 43 (2015), 4534-4548.

\bibitem{NaMo} Moutui, M. A. S., Ouled Azaiez, N. (2020). \textit{Some commutative ring extensions defined by almost Bézout condition}. Hacet. J. Math. Stat. 49 , no. 1, 371-379.

\end{thebibliography}
\end{document}